\newcommand{\newrefformat}[2]{}
\crefname{lem}{Lemma}{Lemmas}
\crefname{thm}{Theorem}{Theorems}
\crefname{Defi}{Definition}{Definitions}
\crefname{nota}{Notation}{Notations}
\crefname{construction}{Construction}{Constructions}
\crefname{prop}{Proposition}{Propositions}
\crefname{rem}{Remark}{Remarks}
\crefname{cor}{Corollary}{Corollaries}
\crefname{scholium}{Scholium}{Scholia}
\crefname{figure}{Figure}{Figures}
\crefname{equation}{Display}{Displays}
\crefname{eq}{Display}{Displays}
\crefname{eqn}{Display}{Displays}
\newenvironment{eq*}{\begin{equation*}}{\end{equation*}}
\newenvironment{eqn*}{\begin{equation*}}{\end{equation*}}
\newcommand{\mb}{\mathbf}
\newcommand{\m}[1]{\mathcal{#1}}
\newcommand{\g}[1]{\ensuremath{\mathbb{#1}}\xspace}
\newcommand{\Cat}{\ensuremath{\textnormal{Cat}}\xspace}
\newcommand{\cd}[2][]{\vcenter{\hbox{\xymatrix#1{#2}}}}
\newcommand{\Set}{\ensuremath{\textnormal{Set}}\xspace}
\newcommand{\twocat}{\ensuremath{\textnormal{2-Cat}}\xspace}
\newcommand{\Icon}{\ensuremath{\textnormal{Icon}}\xspace}
\newcommand{\TAlgs}{\ensuremath{\textnormal{T-Alg}_{\textnormal{s}}}\xspace}
\newcommand{\TAlgo}{\ensuremath{\textnormal{T-Alg}_{\textnormal{o}}}\xspace}
\newcommand{\id}{\textrm{id}}
\begin{document}
\numberwithin{equation}{section}
\newtheorem{thm}[equation]{Theorem}
\newtheorem{prop}[equation]{Proposition}
\newtheorem{lem}[equation]{Lemma}
\newtheorem{cor}[equation]{Corollary}

\newtheoremstyle{example}{\topsep}{\topsep}%
     {}
     {}
     {\bfseries}
     {.}
     {2pt}
     {\thmname{#1}\thmnumber{ #2}\thmnote{ #3}}

   \theoremstyle{example}
   \newtheorem{nota}[equation]{Notation}
   \newtheorem{example}[equation]{Example}
   \newtheorem{Defi}[equation]{Definition}
   \newtheorem{rem}[equation]{Remark}
	\newtheorem{comment}[equation]{Comment}
   \newtheorem{conv}[equation]{Convention}

   \title[Gray]{The Gray tensor product via factorisation}

\author{John Bourke}
\address{Department of Mathematics and Statistics, Masaryk University, Kotl\'a\v rsk\'a 2, Brno 60000, Czech Republic}
\email{bourkej@math.muni.cz}
\curraddr{Department of Mathematics, Macquarie University, NSW 2109 (Australia)}
\email{john.d.bourke@mq.edu.au}

\author{Nick Gurski}
\address{
School of Mathematics and Statistics,
University of Sheffield,
Sheffield, UK, S3 7RH
}
\email{nick.gurski@sheffield.ac.uk}
\keywords{monoidal category, factorisation system, Lawvere theory}
\subjclass{18A30, 18A32, 18D05}

\begin{abstract}
We discuss the folklore construction of the Gray tensor product of 2-categories as obtained by factoring the map from the funny tensor product to the cartesian product.  We show that this factorisation can be obtained without using a concrete presentation of the Gray tensor product, but merely its defining universal property, and use it to give another proof that the Gray tensor product forms part of a symmetric monoidal structure. 
The main technical tool is a method of producing new algebra structures over Lawvere 2-theories from old ones via a factorisation system.
\end{abstract}

\maketitle

\tableofcontents
	
\section*{Introduction}

The Gray tensor product of 2-categories, denoted $\otimes$, plays a central role in 2- and 3-dimensional category theory.  It is the tensor product for a closed, symmetric monoidal structure on the category of 2-categories and (strict) 2-functors, with the internal hom $\mb{Ps}(A,B)$ given by the 2-category of 2-functors, \textit{pseudo}natural transformations, and modifications with source $A$ and target $B$.  As pseudonatural transformations are crucial in much of 2-category theory, so are the 2-categories of the form  $\mb{Ps}(A,B)$ and hence by adjunction the Gray tensor product.  In 3-dimensional category theory, the importance of the Gray tensor product stems from the fact that categories enriched in $(\twocat, \otimes)$ are a convenient model for fully weak 3-categories.

There is a notable hurdle to overcome when working with the Gray tensor product, namely that one can define it fairly simply in terms of its universal property, but giving a concrete description can only be accomplished using a generators-and-relations approach.  While one can use this description to give a normal form for the 1- and 2-cells in $A \otimes B$, this does not make the task of proving that $\otimes$ is part of a symmetric monoidal structure any more tractable.  Gray's original proof\footnote{We note that Gray originally studied the lax version of $\otimes$ as opposed to the pseudo version we use here.} of the pentagon axiom uses some quite complicated calculations in the positive braid monoids \cite{Gray1976Coherence}.  The quickest proof of which the authors are aware\footnote{We briefly describe this strategy.  Firstly, one calculates that the homs $Ps(A,B)$ equip \twocat with a symmetric closed structure.   Secondly, one verifies that each functor $Ps(A,-)$ has a left adjoint $- \otimes A$.  Now a consequence of Proposition 2.3 of \cite{Day1978On} is that a symmetric closed category $(C,[-,-],I)$ together with adjunctions $- \otimes A \dashv [A,-]$ gives rise to a symmetric monoidal category $(C,\otimes,I)$.  This gives the sought after symmetric monoidal structure on \twocat.} uses the machinery of symmetric closed categories; this approach reduces the proof to some basic, but quite tedious, calculations involving pseudonatural transformations.

This paper originated in our desire to give another construction of the symmetric monoidal structure.  This is based upon the folklore fact,  due to Steve Lack, that the Gray tensor product $A \otimes B$ appears as the middle term in a factorisation
\[
A \star B \rightarrow A \otimes B \rightarrow A \times B
\]
from the funny tensor product to the cartesian product, in which the first map is an isomorphism on underlying categories and the second map is locally full and faithful.  One must then show that such a factorisation of a comparison map from one monoidal structure to another produces a new monoidal structure, in this case that for $\otimes$.  The second author gave a proof of such a factorisation in \cite{Gurski2013Coherence} using the generators-and-relations definition of the Gray tensor product, but that argument is not particularly insightful nor does it have wider applicability.

This paper has three sections.  The first is an entirely self-contained section giving the main technical tool, a method for producing new algebra structures from old ones using a factorisation system.  We phrase this in terms of Lawvere 2-theories, and believe this result will be of independent interest.  The second section gives the necessary background on the three tensor products we will study: $\star, \otimes$ and $ \times$.  Most notably, we will only develop $\otimes$ using its universal properties, completely avoiding the generators-and-relations description.  In the third section, we verify the existence of the necessary factorisation, thus completing the proof that $\otimes$ is part of a symmetric monoidal structure.

The second author was supported by EPSRC EP/K007343/1.

\section{Lawvere 2-theories and factorisation systems}

This section is devoted to proving a technical result that will enable us to produce, from
\begin{itemize}
\item a pair of monoidal structures $\otimes_{1}, \otimes_{2}$ on the same category $C$,
\item natural comparison maps $x \otimes_{1} y \to x \otimes_{2} y$ (and similarly with the units) and
\item an orthogonal factorisation system $(\m E,\m M)$ on $ C$ suitably compatible with both monoidal structures,
\end{itemize}
a new monoidal structure $\otimes$ and comparison maps
\[
x \otimes_{1} y \to x \otimes y \to x \otimes_{2} y.
\]
We give some preliminary definitions in order to state the desired result.

\begin{nota}
We will write monoidal categories with tensor product $\otimes$ and unit $I$ as $( C, \otimes, I)$, or $( C, \otimes, I, a, l, r)$ if we want to emphasize the associativity and unit isomorphisms.
\end{nota}

\begin{Defi}\label{Defi:oplax}
Let $( C, \otimes, I), ( D, \oplus, J)$ be monoidal categories.  An \emph{oplax monoidal functor} $F: C \to  D$ consists of a functor $F$ of the underlying categories, a morphism $\phi_{0}:F(I) \to J$, and a natural transformation $\phi_{-,-}$ with components
\[
\phi_{x,y}:F(x \otimes y) \to Fx \oplus Fy
\]
such that the following diagrams commute for all $x,y,z\in  C$.
\[
  \xy
  (0,0)*+{F(I \otimes x)}="00";
  (30,0)*+{FI \oplus Fx}="10";
  (30,-10)*+{J \oplus Fx}="11";
  (30,-20)*+{Fx}="12";
  {\ar^{\phi} "00"; "10"};
  {\ar^{\phi_{0} \oplus 1} "10"; "11"};
  {\ar^{l} "11"; "12"};
  {\ar_{Fl} "00"; "12"};
	(50,0)*+{F(x \otimes I)}="20";
  (80,0)*+{Fx \oplus FI}="30";
  (80,-10)*+{Fx \oplus J}="31";
  (80,-20)*+{Fx}="32";
  {\ar^{\phi} "20"; "30"};
  {\ar^{1 \oplus \phi_{0}} "30"; "31"};
  {\ar^{r} "31"; "32"};
  {\ar_{Fr} "20"; "32"};
	(20,-30)*+{F((x \otimes y) \otimes z)}="55";
	(60,-30)*+{F(x \otimes (y \otimes z))}="57";
	(100,-30)*+{Fx \oplus F(y \otimes z)}="56";
  (20,-45)*+{F(x \otimes y) \oplus Fz}="65";
  (60,-45)*+{(Fx \oplus Fy) \oplus Fz}="66";
	(100,-45)*+{Fx \oplus (Fy \oplus Fz)}="67";
	{\ar^{Fa} "55"; "57"};
	{\ar^{\phi} "57"; "56"};
  {\ar^{1 \oplus \phi} "56"; "67"};
  {\ar_{\phi} "55"; "65"};
  {\ar_{\phi \oplus 1} "65"; "66"};
	{\ar_{a} "66"; "67"};
  \endxy
  \]
If $ C,  D$ are symmetric with symmetries generically denoted $\beta$, then $F$ is oplax symmetric monoidal if the following additional axiom holds.
\[
\xy
(0,0)*+{F(x \otimes y) }="85";
  (40,0)*+{Fx \oplus Fy }="95";
  (40,-15)*+{Fy \oplus Fx}="86";
  (0,-15)*+{F(y \otimes x )}="96";
  {\ar^{\phi } "85"; "95"};
  {\ar^{\beta} "95"; "86"};
  {\ar_{F(\beta)} "85"; "96"};
  {\ar_{\phi} "96"; "86"};
  \endxy
  \]
\end{Defi}

\begin{rem}\label{rem:oplax-id}
It will be useful for us to record the case when the functor $F$ is actually the identity functor between two monoidal structures on the same category.  We will denote these as $( C, \otimes, I, a, l, r)$ and $( C, \otimes', I', a', l', r')$.  Then an oplax structure on the identity functor, viewed as a functor $( C, \otimes) \to ( C, \otimes')$, has data consisting of a natural transformation with components $x \otimes y \to x \otimes' y$ and a morphism $I \to I'$.  The axioms are then given as above, with $\oplus$ replaced by $\otimes'$, instances of $Fa$ replaced by $a$, instances of $a$ replaced by $a'$, similarly for $l$ and $r$, and all instances of the functor $F$ removed.  We leave this to the reader.
\end{rem}

Next we recall the notion of an orthogonal factorisation system on a category.

\begin{Defi}
A \emph{lifting problem} in a category  $ C$ with two chosen classes of maps $(\m E,\m M)$ is a  square
\[
\cd{A \ar[r]^{e} \ar[d] & B \ar[d]  \\
C \ar[r]_{m} & D}
\]
where $e \in \m E$ and $m \in \m M$.  A \emph{solution to a lifting problem} is a diagonal filler of the square such that both triangles commute.
\[
\cd{A \ar[r]^{e} \ar[d] & B \ar[d] \ar@{.>}[dl] \\
C \ar[r]_{m} & D}
\]
We will say that \emph{$e$ is orthogonal to $m$}, written $e \perp m$, if every lifting problem with horizontal arrows $e,m$ as above has a unique solution.
\end{Defi}

\begin{Defi}\label{Defi:ofs}
An \emph{orthogonal factorisation system} $(\m E,\m M)$ on $ C$ consists of two classes of maps, $\m E$ and $\m M$, such that
\begin{itemize}
\item every morphism $f$ in $C$ factors into a morphism in $\m E$ followed by a morphism in $\m M$, 
\item $\m E$ consists of all the morphisms $f$ such that $f \perp m$ for all $m \in \m M$ and
\item $\m M$ consists of all the morphisms $g$ such that $e \perp g$ for all $e \in \m E$.
\end{itemize}
\end{Defi}

We can now state the theorem which is our goal.
\begin{thm}\label{thm:factormonoidal}
Let $C$ be a category equipped with a pair of monoidal structures $(C, \otimes_{1}, I_{1})$ and $(C, \otimes_{2}, I_{2})$, and further that the identity functor $1: C \rightarrow C$ is given the structure of an oplax monoidal functor $(C, \otimes_{1}, I_{1}) \rightarrow (C, \otimes_{2}, I_{2})$ with structure maps
\[
\begin{array}{c}
f_{x,y}:x \otimes_{1} y \rightarrow x \otimes_{2} y, \\
f_{0}:I_{1} \rightarrow I_{2}.
\end{array}
\]
Assume $C$ has an orthogonal factorization system $(\m E,\m M)$ which satisfies the following properties:
\begin{itemize}
\item if $f, f' \in \m E$, then $f \otimes_{1} f' \in \m E$, and
\item if $g, g' \in \m M$, then $g \otimes_{2} g' \in \m M$.
\end{itemize}
Then factoring the $f_{x,y}, f_{0}$ as an $\m E$-map followed by an $\m M$-map
\[
\xy
(0,0)*+{x \otimes_{1} y}="00";
(25,0)*+{x \otimes_{2} y}="20";
(12.5,-10)*+{x \otimes y}="10";
(50,0)*+{I_{1}}="40";
(62.5, -10)*+{I}="50";
(75,0)*+{I_{2}}="60";
{\ar^{f_{x,y}} "00";  "20"};
{\ar_{e_{x,y}} "00"; "10"};
{\ar_{m_{x,y}} "10"; "20" };
{\ar^{f_{0}} "40"; "60" };
{\ar_{e_{0}} "40"; "50" };
{\ar_{m_{0}} "50"; "60" };
\endxy
\]
gives a functor $\otimes:C^{2} \rightarrow C$, an object $I \in C$, and natural transformations
\[
\otimes_{1} \stackrel{e}{\Longrightarrow} \otimes \stackrel{m}{\Longrightarrow} \otimes_{2}.
\]
Furthermore, $C$ admits a unique monoidal structure with tensor product $\otimes$ and unit object $I$ such that $e,m$ each give oplax monoidal structures to the identity functor.  If the monoidal structures $\otimes_{i}$ are additionally equipped with symmetric (resp., braided) structures, and the $f_{x,y}, f_{0}$ give the identity functor the structure of an oplax symmetric (resp., braided) monoidal functor, then we can additionally give $(C, \otimes, I)$ a unique symmetry (resp., braid) such that $e, m$ each give oplax symmetric (resp., braided) monoidal structures to the identity functor.
\end{thm}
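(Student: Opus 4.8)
The plan is to produce all of the coherence data by repeatedly invoking the unique diagonal fillers of the factorisation system, and to verify each axiom by reducing it to the corresponding axiom for $\otimes_{1}$ and $\otimes_{2}$. First, the functoriality of $\otimes$ and the naturality of $e,m$ come from the standard ``factorisation of a natural transformation'' argument: given $(u,v)\colon(x,y)\to(x',y')$, the naturality square for $f$ factors as a square with left leg $e_{x,y}\in\m E$ and right leg $m_{x',y'}\in\m M$, so $u\otimes v$ is defined as its unique filler; preservation of identities/composites and the naturality of $e,m$ then follow from uniqueness of fillers. For the associator, I record the two comparison maps $E_{x,y,z}:=e_{x\otimes y,z}\circ(e_{x,y}\otimes_{1}\id_{z})\in\m E$ and $M_{x,y,z}:=(m_{x,y}\otimes_{2}\id_{z})\circ m_{x\otimes y,z}\in\m M$ (using closure of $\m E$ under $\otimes_{1}$ and of $\m M$ under $\otimes_{2}$, plus closure under composition), together with their right-associated analogues $E',M'$. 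A short computation using the naturality of $f$ gives $M\circ E=(f_{x,y}\otimes_{2}\id_{z})\circ f_{x\otimes_{1}y,z}$, and the oplax associativity axiom for $f$ then reads exactly $a_{2}\circ(M\circ E)=(M'\circ E')\circ a_{1}$. Hence the square with left leg $E\in\m E$, right leg $M'\in\m M$, top $E'\circ a_{1}$ and bottom $a_{2}\circ M$ commutes, and I define $a_{x,y,z}$ as its unique filler. The two resulting triangles $a\circ E=E'\circ a_{1}$ and $M'\circ a=a_{2}\circ M$ are, after unwinding $E,E',M,M'$ via naturality of $e$ and $m$, precisely the oplax associativity axioms for $e$ and for $m$; uniqueness of the filler is what will force the whole structure.

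For the unitors the same idea applies, but with an extra twist that makes them invertible. Writing $E_{l}:=e_{I,x}\circ(e_{0}\otimes_{1}\id_{x})\in\m E$ and $M_{l}:=(m_{0}\otimes_{2}\id_{x})\circ m_{I,x}\in\m M$, one computes $M_{l}\circ E_{l}=(f_{0}\otimes_{2}\id_{x})\circ f_{I_{1},x}$, whence the oplax left-unit axiom for $f$ gives $M_{l}\circ E_{l}=l_{2}^{-1}\circ l_{1}$. Thus the square with left leg $E_{l}$, top $l_{1}$, right leg the isomorphism $l_{2}^{-1}$ and bottom $M_{l}$ commutes, and I define $l$ as its unique filler; its two triangles recover the oplax left-unit axioms for $e$ and $m$. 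Crucially, since $M_{l}\circ E_{l}$ is invertible, I would prove the elementary lemma that in any orthogonal factorisation system, if $m\circ e$ is invertible with $e\in\m E$ and $m\in\m M$ then both $e$ and $m$ are invertible (a two-line argument: the putative section $e\,(me)^{-1}$ and $\id$ are both fillers of the square with sides $e,m,e,m$, so they agree). Applying this to $E_{l},M_{l}$ shows $l=l_{2}\circ M_{l}$ is an isomorphism, and naturality of $l$ follows from uniqueness of fillers. The right unitor $r$ is treated symmetrically.

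The coherence axioms, together with naturality and invertibility of $a$, all follow from a single reduction principle. To prove an equality of two maps $p,q\colon P\to Q$ between iterated $\otimes$-expressions, I exhibit an ``all-$e$'' comparison $\widetilde E\in\m E$ out of the corresponding $\otimes_{1}$-expression and an ``all-$m$'' comparison $\widetilde M\in\m M$ into the corresponding $\otimes_{2}$-expression, so that $p$ and $q$ both become fillers of the \emph{same} square with legs $\widetilde E,\widetilde M$; uniqueness then yields $p=q$ provided $p\circ\widetilde E=q\circ\widetilde E$ and $\widetilde M\circ p=\widetilde M\circ q$. The first equality is proved by pushing $\widetilde E$ leftwards through the $a$'s using $a\circ E=E'\circ a_{1}$ and naturality of $e$, which converts $p\circ\widetilde E$ into $\widetilde E\circ(\text{the same axiom-side for }\otimes_{1})$; since that axiom holds in the genuine monoidal category $(C,\otimes_{1},I_{1})$, the two sides agree, and dually for $\widetilde M$ and $\otimes_{2}$. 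This mechanism proves the pentagon and triangle identities, the naturality of $a$, and (by building $a^{-1}$ as the filler of the transposed square and checking $a\circ a^{-1}=\id$, $a^{-1}\circ a=\id$ again by filler-uniqueness) the invertibility of $a$. Uniqueness of the monoidal structure is immediate, since each of $a,l,r$ was characterised as the unique filler forced by the oplax axioms.

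The symmetric (resp.\ braided) case is entirely analogous: the oplax symmetry axiom for $f$ makes the square with legs $e_{x,y},m_{y,x}$, top $e_{y,x}\circ\beta_{1}$ and bottom $\beta_{2}\circ m_{x,y}$ commute, defining $\beta_{x,y}$ as its filler; its triangles are the oplax symmetry axioms for $e,m$, and naturality, the involution $\beta_{y,x}\circ\beta_{x,y}=\id$, and the hexagon(s) follow from the reduction principle and filler-uniqueness. I expect the main obstacle to be the bookkeeping in this reduction step: one must set up, for each axiom, the correct ``all-$e$'' and ``all-$m$'' comparisons, verify their memberships in $\m E$ and $\m M$, and carefully commute them past the associators and unitors using naturality of $e,m$ and the defining triangles of $a,l,r$ — routine but error-prone, and the place where the compatibility hypotheses on $(\m E,\m M)$ with $\otimes_{1},\otimes_{2}$ are genuinely used.
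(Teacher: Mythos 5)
Your proposal is correct, but it takes a genuinely different route from the paper. The paper never verifies the coherence axioms for $(C,\otimes,I)$ directly: instead it identifies monoidal categories with models of the Lawvere 2-theory $\g T$ for monoidal categories, whose underlying 1-theory is free on the pointed-magma signature $\Omega$, and then invokes the general \cref{thm:LawvereTheory}, which lifts the factored $\Omega$-algebra structure uniquely along $U:Mod(\g T)_{o} \to Mod(\Omega)_{o}$; the symmetric and braided cases follow by swapping in the appropriate 2-theory. Your construction is essentially the concrete unrolling of that abstract argument: your ``all-$e$'' and ``all-$m$'' comparisons $\widetilde E$, $\widetilde M$ are exactly the paper's transformations $e(t)$, $m(t)$ defined inductively on $\Omega$-terms (where closure of $\m E$ under $\otimes_{1}$ and $\m M$ under $\otimes_{2}$ is likewise the only input), and your ``reduction principle'' is exactly the paper's definition of $Z(\theta)$ as the unique diagonal filler between $e(s)$ and $m(t)$, with coherence following from uniqueness of fillers. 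The trade-off is clear. Your route is self-contained and elementary --- it needs no 2-theory/2-monad background (the paper's reduction leans on the correspondence with strongly finitary 2-monads and results cited from elsewhere) --- but it forces case-by-case verification of pentagon, triangle, hexagons, naturality and invertibility, which the paper explicitly set out to avoid; you also need ad hoc devices for invertibility (your $m\circ e$ lemma for the unitors, the transposed square for $a$), whereas in the paper invertibility is automatic because $Z$ is a 2-functor and so preserves invertible 2-cells. Conversely, the paper's approach proves all coherence equations simultaneously --- they are equations between 2-cells of $\g T$, preserved by any 2-functor --- and yields a reusable result for every categorical structure with no equations on objects, at the price of the machinery of \cref{thm:LawvereTheory}. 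Two small repairs to your write-up: in the invertibility lemma the competing filler should be the endomorphism $e\circ(me)^{-1}\circ m$ of the middle object (not $e\,(me)^{-1}$), and in the reduction principle the left-hand comparison appearing after pushing $p\circ\widetilde E$ leftwards is the all-$e$ comparison of the \emph{target} expression, $\widetilde E_{Q}$, rather than $\widetilde E$ itself; both are minor and do not affect the argument.
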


This result will allow us to construct the symmetric monoidal structure on \twocat using the Gray tensor product, once we know it fits into such a factorisation.  Now it is not hard to convince oneself that the above result is true, but to verify the coherence axioms for the factored monoidal structure directly is a significant undertaking.  In Theorem~\ref{thm:LawvereTheory} we describe a more general result on factorisations in 2-dimensional universal algebra that will give the above result as a special case.  It turns out that the abstract setting best suited to proving such results is that of Lawvere 2-theories rather than 2-monads.  Since the subject is less well developed we describe the necessary background below.

\subsection{Lawvere theories}

\begin{Defi}
Let $\g F$ denote the skeleton of the category of finite sets with objects $[n] = \{ 1, 2, \ldots, n \}$.
\end{Defi}
By a finite power of an object $a$ we mean a product $a^{n}$ where $n$ is finite.  Dually finite copowers are certain finite coproducts.  Observe that the category $\g F$ has a canonical choice of finite copowers, and so $\g F^{op}$ has a canonical choice of finite powers.

\begin{Defi}
A \emph{Lawvere theory} \cite{Lawvere1963Functorial} is a category $\g T$ equipped with an identity on objects functor $\g F^{op} \to \g T$ which preserves finite powers of $1$.
\end{Defi}

\begin{rem}
It follows that a Lawvere theory $\g T$ comes equipped with a choice of finite powers of $1$ and, indeed, of finite products.
\end{rem}

In the standard definition of model, one considers models of $\g T$ in a category $C$ with finite products.  A model then consists of a finite product preserving functor $\g T \to C$ and the category of models consists of the full subcategory $FP(\g T,C) \to [\g T,C]$ of product preserving functors.  The first basic fact about Lawvere theories is that categories of the form $FP(\g T,\Set)$ coincide, up to equivalence of categories over $\Set$, with the categories of models of equational theories $\textnormal{T}=(\Omega,E)$.  Here $\Omega = \{\Omega(n):n \in \g N\}$ is a signature and $E$ a set of equations.  For a textbook treatment of this result see Propositions 3.2.9 and 3.3.4 of \cite{Borceux1994Handbook2}.

Now we would prefer our categories (later 2-categories) of models to capture the algebraic categories \emph{up to isomorphism} rather than equivalence, primarily because the result that we will prove, \cref{thm:factormonoidal}, refers to structures such as identity morphisms that are not invariant under equivalence.  Accordingly we use the following definition.

\begin{Defi}\label{Defi:model}
A \emph{model} of a Lawvere theory $\g T$ in a category $C$ equipped with chosen finite powers is a functor $A:\g T \to C$ strictly preserving finite powers of $1$.  These are the objects of the category $Mod(\g T,C)$ whose morphisms are natural transformations.
\end{Defi}

\begin{rem}
Let us  compare the two notions.  If $C$ has both specified finite powers and finite products -- as in all of our examples -- then we have an inclusion $Mod(\g T,C) \to FP(\g T,C)$ over $C$ as below,
\begin{equation*}
\cd{Mod(\g T,C)\ar[dr]_{U} \ar[rr] && FP(\g T,C) \ar[dl]^{V}\\
& C}
\end{equation*}
and the inclusion is an equivalence of categories.  Here both forgetful functors $U$ and $V$ are given by evaluation at $1$.  Now the key property of $U$, not shared by $V$, is that each isomorphism $f:X \to UY$ lifts along $U$ to a unique isomorphism $f^{\star}:X^{\star} \to Y$ over $f$.  A functor having this property is said to be a \emph{discrete isofibration} (or a \emph{uniquely transportable} functor \cite{Adamek2004The-Joy}.)

As mentioned above, a Lawvere theory $\g T$ corresponds to an equational theory $T=(\Omega,E)$ for which there is an equivalence $FP(\g T,\Set) \simeq Mod(\textnormal{T},\Set)$ over $\Set$ as in the right triangle below.
\begin{equation*}
\cd{Mod(\g T,\Set)\ar[dr]_{U} \ar[r] & FP(\g T,\Set) \ar[d]_{V} \ar[r] & Mod(\textnormal{T},\Set) \ar[dl]^{W}\\
& \Set}
\end{equation*}
We then obtain a composite equivalence $Mod(\g T,\Set) \simeq Mod(\textnormal{T},\Set)$ over $\Set$.  Because both $U$ and $W$ are discrete isofibrations the composite equivalence must be an isomorphism.  Arguing in this way, we conclude that \emph{categories of the form $Mod(\g T,\Set)$ coincide, up to concrete isomorphism, with the categories of models of equational theories.}
\end{rem}
\begin{rem}
In the case of an equational theory $(\Omega,E)$ whose set $E$ of equations is empty, the corresponding Lawvere theory $F\Omega$ is called the free Lawvere theory on the signature $\Omega$.  It owes its name to the fact that it arises from an adjunction between the category of Lawvere theories and of signatures, but we will need the following syntactic definition.
\end{rem}
\begin{Defi}\label{Defi:free_theory}
Let $\Omega$ be a signature. The \emph{free Lawvere theory} $F\Omega$ has $F\Omega(n,m)=F\Omega(n,1)^{m}$ where $F\Omega(n,1)=T_{\Omega}(n)$ is the set of \emph{$\Omega$-terms in $n$ variables}.  The sets of $\Omega$-terms are inductively defined by
\begin{itemize}
\item
for each $i \in \{1,\ldots ,n\}$ there is a given variable $x_{i} \in T_{\Omega}(n)$, and
\item
given $t_{1} \in T_{\Omega}(m_{1}), \ldots ,t_{n} \in T_{\Omega}(m_{n})$ and $f \in \Omega(n)$ then $f(t_{1} \ldots t_{n}) \in T_{\Omega}(m_{1}+\ldots + m_{n})$.
\end{itemize}
Composition in $F\Omega$ is given by substitution of terms.
\end{Defi}

\begin{conv}
We often identify $f \in \Omega(n)$ with the $\Omega$-term $f(x_{1}, \ldots, x_{n})$.
\end{conv}

We have an isomorphism $Mod(F\Omega,C) \cong Mod(\Omega,C)$ where this last category consists of \emph{$\Omega$-algebras}: objects $a \in C$ equipped with a map $f^{a}:a^{n} \to a$ for each $f \in \Omega(n)$.  From an $F\Omega$-model $A$ the corresponding algebra is simply $A(1)$ equipped with the action
\begin{equation*}
\cd{A(1)^{n} \ar[rrr]^{A(f)} &&& A(1).}
\end{equation*}
From an $\Omega$-algebra $a$ the corresponding $F\Omega$-model sends  $x_{i}:n \to 1$ to the $i$'th product projection $a(x_{i}):a^{n} \to a$, and is inductively defined on $f(t_{1}, \ldots, t_{n})$ as
\begin{equation}\label{eq:Omega1}
\cd{a^{m_{1}+\ldots +m_{n}} \ar[rrr]^{a(t_{1}) \times \cdots \times a(t_{n})}&&&  a^{n} \ar[rr]^{a(f)} && a.}
\end{equation}

\subsection{From theories to 2-theories}
There are several possible generalisations of Lawvere theories to enriched category theory and 2-category theory, some of which are covered in \cite{Gray19732-algebraic, Power1999Enriched, Yanofsky2000The-syntax, Yanofsky2001Coherence, Power2005Discrete, Cohen2009Thesis, Lack2011Notions}.  We are only interested in the most basic generalisation, which appears to have been first mentioned in print in \cite{Gray19732-algebraic}, and has been further studied in \cite{Yanofsky2000The-syntax, Yanofsky2001Coherence, Cohen2009Thesis}.

\begin{conv}
When we speak of powers in a 2-category below we are referring to certain products, but now in the stronger enriched sense: the product $\Pi_{i \in I} a_{i}$ in a 2-category $C$ is characterised by a natural isomorphism of \emph{categories}
\[
C(x, \Pi_{i \in I} a_{i}) \cong \Pi_{i \in I} C(x,a_{i})
\]
rather than sets.
\end{conv}

In the following definition the category $\g F$ is viewed as a locally discrete 2-category: one in which each 2-cell is an identity.

\begin{Defi}
A \emph{Lawvere 2-theory} is a 2-category $\g T$ equipped with an identity on objects 2-functor $\g F^{op} \to \g T$ which preserves finite powers of $1$.
\end{Defi}
Such equips $\g T$ with a canonical choice of finite powers of $1$.  Again, we will only consider models of 2-theories in 2-categories $C$ equipped with a \emph{specified choice} of finite powers $a^{n}$ of each object.

\begin{Defi}
A \emph{model} of a Lawvere 2-theory $\g T$ in $C$ is a 2-functor $A:\g T \to C$ strictly preserving finite powers of $1$.
\end{Defi}
We will now discuss the various notions of morphism between models of a Lawvere 2-theory $\g T$.  There are, just as with algebras over a 2-monad, notions of strict, strong, lax and oplax morphisms of models.

\begin{Defi}
A \emph{strict morphism} $f:A \to B$ of $\g T$-models is a 2-natural transformation $A \Rightarrow B:\g T \to C$.
\end{Defi}

Since our focus will be on oplax morphisms, we give that definition, with the definition of a lax or strong morphism the obvious analogue in which the oplax transformation is replaced by a lax or pseudonatural one, respectively.  Before doing so, we recall the definition of an oplax transformation, stated here in the context of 2-functors between 2-categories but easily adapted to weaker contexts.

\begin{Defi}\label{Defi:oplax_trans}
Let $A,B$ be 2-categories, and $F,G: A \to B$ 2-functors.  An \emph{oplax transformation} $\alpha: F \Rightarrow G$ consists of
\begin{itemize}
\item 1-cells $\alpha_a : Fa \to Ga$ for all objects $a \in A$ and
\item 2-cells $\alpha_f: \alpha_b Ff \Rightarrow Gf \alpha_a$ for all 1-cells $f: a \to b$ in $A$
\end{itemize}
such that the following diagrams commute for all objects $a$, 2-cells $\tau: f \Rightarrow f'$, and composable pairs $a \stackrel{f}{\to} b \stackrel{g}{\to} c$, respectively, in $A$.
\[
\xy
(0,0)*+{\alpha_a}="11";
(25,0)*+{\alpha_a \, F\id}="12";
(25,-10)*+{G\id \, \alpha_a}="13";
{\ar@{=} "11"; "12"};
{\ar@{=>}^{\alpha_{\id}} "12"; "13"};
{\ar@{=} "11"; "13"};
(50,0)*+{\alpha_b  \, Ff}="31";
(75,0)*+{Gf  \, \alpha_a}="32";
(50,-10)*+{\alpha_b  \, Ff'}="33";
(75,-10)*+{Gf'  \, \alpha_a}="34";
{\ar@{=>}^{\alpha_f} "31"; "32"};
{\ar@{=>}^{G\tau * 1} "32"; "34"};
{\ar@{=>}_{1* F\tau} "31"; "33"};
{\ar@{=>}_{\alpha_{f'}} "33"; "34"};
(7.5,-20)*+{\alpha_c  \, Fg  \, Ff}="21";
(37.5,-20)*+{Gg  \, \alpha_b  \, Ff}="22";
(67.5,-20)*+{Gg  \, Gf  \, \alpha_a}="23";
(7.5,-30)*+{\alpha_c  \, F(gf)}="24";
(67.5,-30)*+{G(gf)  \, \alpha_a}="25";
{\ar@{=>}^{\alpha_g * 1} "21"; "22"};
{\ar@{=>}^{1* \alpha_f} "22"; "23"};
{\ar@{=} "23"; "25"};
{\ar@{=>}_{\alpha_{gf}} "24"; "25"};
{\ar@{=} "21"; "24"};
\endxy
\]
\end{Defi}

We learned of the following definition from \cite{LackTalk2007}.  An equivalent, but more complicated, definition was described earlier in \cite{Yanofsky2000The-syntax}.
\begin{Defi}\label{Defi:oplax2}
An \emph{oplax morphism} $f:A \rightsquigarrow B$ of $T$-models is an oplax natural transformation $A \Rightarrow B$ such that the composite
\begin{equation*}
\xy
(0,0)*+{\g F^{op}}="00";(25,0)*+{\g T}="10";(50,0)*+{C}="20";
{\ar^{} "00"; "10"};
{\ar@/^3ex/^{A} "10"; "20"}; {\ar@/_3ex/_{B} "10"; "20"};
{\ar@{=>}^{f}(37,4)*+{};(37,-4)*+{}};
\endxy
\end{equation*}
is 2-natural.
\end{Defi}

This last restriction forces the component $f(n):A(n) \to B(n)$ of an oplax morphism $f$ to equal $f(1)^{n}:A(1)^{n} \to B(1)^{n}$, and ensures that the oplax transformation is determined by 2-cell components as below.
\begin{equation*}
\xy
(0,0)*+{A(1)^{n}}="00";(25,0)*+{B(1)^{n}}="10";
(0,-20)*+{A(1)}="01";(25,-20)*+{B(1)}="11";
{\ar^{f(1)^{n}} "00"; "10"};{\ar_{f(1)} "01"; "11"};
{\ar_{A(t)} "00"; "01"};{\ar^{B(t)} "10"; "11"};
{\ar@{=>}^{f(t)}(8,-12)*+{};(16,-8)*+{}};
\endxy
\end{equation*}
This definition fits with what one expects from the definition of an oplax monoidal functor.  There are 2-cells between strict or oplax morphisms of models, the modifications, but these will not concern us.  Accordingly one has 2-categories $Mod(T,C)_{s}$ and $Mod(T,C)_{o}$ of strict and oplax morphisms of models.

\begin{conv}
If a $T$-model $X:\g T \to C$ has $X(1)=a$ then we call $X$ a $T$-algebra structure \emph{on $a$} and if an oplax morphism $f:X \to Y$ between two different $T$-algebra structures on $a$ has $f(1)=1:a \to a$ then we say that $f$ is an \emph{oplax morphism over the identity on $a$}.  This is equally to say that the oplax natural transformation $f$ is an \emph{icon} (see \cref{Defi:icon} below).
\end{conv}

\begin{Defi}\label{Defi:icon}
Let $F,G: A \to B$ be 2-functors, and assume that $Fa = Ga$ for all objects $a \in A$.  An \emph{icon} \cite{Lack2010Icons} $\alpha$ is an oplax transformation $\alpha:F \Rightarrow G$ such that $\alpha_a$ is the identity morphism on $Fa = Ga$ for all objects $a \in A$.  This amounts to giving, for each 1-cell $f: a \to b$, a 2-cell $\alpha_f:Ff \Rightarrow Gf$ satisfying three axioms as for oplax transformations.
\end{Defi}

\begin{rem}
The abstract tool most commonly used to model algebraic structures borne by categories is that of a 2-monad, rather than a 2-theory.  The special class of \emph{strongly finitary 2-monads} on \Cat \cite{Kelly1993Finite-product-preserving} capture monoidal categories and their symmetric variants, and more generally those structures whose operations $C^{n} \to C$ involve only functors from a finite power of $C$ to itself.  In order to show that 2-theories model such categorical structures it consequently suffices to establish their correspondence with strongly finitary 2-monads.  We outline this correspondence now.  For further details, in a general context, see Section 6 of \cite{Lack2011Notions}.

From such a 2-monad $T$ the corresponding 2-theory $\g T$ has $\g T(n,m)=\Cat(m,Tn)$ with composition inherited from the Kleisli 2-category $\Cat_{T}$.  The composite inclusion $j:\g T^{op} \to \Cat_{T} \to \TAlgs$ to the 2-category of strict $T$-algebras and strict morphisms induces a singular 2-functor $\tilde{j}:\TAlgs \to [\g T,\Cat]$ that restricts to an equivalence $\tilde{j}:\TAlgs \simeq FP(\g T,\Cat)_{s}$ with the 2-category of finite product preserving functors and 2-natural transformations.  This is a special case of Theorem 6.6 of \cite{Lack2011Notions}.  Composing this with the equivalence $FP(\g T,\Cat)_{s} \simeq Mod(\g T,\Cat)_{s}$ gives rise to an equivalence $\TAlgs \simeq Mod(\g T,\Cat)_{s}$ over $\Cat$ up to 2-natural isomorphism.  Since both forgetful 2-functors $\TAlgs \to \Cat$ and $ Mod(\g T,\Cat)_{s} \to \Cat$ have discrete isofibrations for underlying functors, the equivalence gives rise to an isomorphism $$\TAlgs \cong Mod(\g T,\Cat)_{s}$$ over $\Cat$.

It is an as-yet unpublished result of Lack and Power \cite{LackPower} that this correspondence extends to match weak $\g T$-algebra morphisms (using the 2-theory) with the corresponding weak $T$-algebra maps (using the 2-monad).  In particular one obtains an isomorphism of 2-categories $\TAlgo \cong Mod(\g T,\Cat)_{o}$ over \Cat.  We note that this last result can be deduced by an application of Theorem 25 of \cite{Bourke2014Two-dimensional}.
\end{rem}

The above is quite abstract.  It might be helpful to point out that in many cases  the 2-theory $\g T$ can be described quite explicitly.

\begin{example}
Let $\g T$ be the theory for monoidal categories.  We have
\[
\g T(n,1)=\Cat(1,Tn)\cong Tn,
\]
 the free monoidal category on $n$ elements.  This has objects the elements of the free pointed magma on $n$.  These are bracketed words in $\{x_{1}, \ldots, x_{n},e\}$ such as $(x_{1}(x_{2}x_{4}))e$.  A unique isomorphism connects two such elements in $\g T(n,1)$ just when they are identified as words in the free monoid on $n$ elements, with $e$ being sent to the identity element.  Of course $\g T(n,m)=\g T(n,1)^{m}$ and composition is by substitution.
\end{example}

\begin{nota}
A Lawvere theory $\g T$ can be viewed as a 2-theory in which each 2-cell is an identity, and we use the same name to indicate the 2-theory.
\end{nota}

In the case of a Lawvere theory viewed as a 2-theory, a $\g T$-model in a 2-category $ C$ just amounts to a $\g T$-model in the underlying category $ C_{0}$.  Something new only appears when considering weak $\g T$-morphisms  and the case of interest is, again, the free Lawvere theory $F\Omega$ on a signature.  In that case we have an isomorphism of 2-categories
\[
Mod(F\Omega, C)_{o} \cong Mod(\Omega,C)_{o}.
\]
Here a morphism $f:a \to b$ of $Mod(\Omega,C)_{o}$ is an oplax morphism of $\Omega$-algebras: that is, a morphism $k:a \to b$ together with a 2-cell $k(f):k \circ f^{a} \Rightarrow f^{b} \circ k^{n}$ for each $f \in \Omega(n)$.  The 2-cells of $Mod(\Omega,C)_{o}$ are the evident ones.

Given an oplax morphism $k:A \to B$ of $F\Omega$-algebras the oplax $\Omega$-algebra map is $k(1):A(1) \to B(1)$ equipped with the 2-cell $k(f):k(1) \circ A(f) \Rightarrow B(f) \circ k(1)^{n}$ at $f \in \Omega(n)$.  From an oplax morphism $k:a \to b$ of $\Omega$-algebras the corresponding oplax $F\Omega$-algebra map is defined inductively at $f(t_{1}, \ldots t_{n}) \in F\Omega(n,1)$ by the pasting below.
\begin{equation}\label{eq:Omega2}
\xy
(0,0)*+{a^{m_{1}+\ldots +m_{n}}}="00";
(50,0)*+{a^{n}}="10";
(90,0)*+{a}="20";
(0,-20)*+{b^{m_{1}+\ldots +m_{n}}}="01";
(50,-20)*+{b^{n}}="11";
(90,-20)*+{b}="21";
{\ar^{t^{a}_{1} \times \ldots \times t^{a}_{n}} "00"; "10"};{\ar^{f^{a}} "10"; "20"};
{\ar_{t^{b}_{1} \times \ldots \times t^{b}_{n}} "01"; "11"};{\ar_{f^{b}} "11"; "21"};
{\ar_{k^{m_{1}+\ldots +m_{n}}} "00"; "01"};{\ar_{k^{n}} "10"; "11"};{\ar^{k} "20"; "21"};
{\ar@{=>}^{k(t_{1})\times \ldots \times k(t_{n})}(18,-7)*+{};(18,-13)*+{}};{\ar@{=>}^{k(f)}(70,-7)*+{};(70,-13)*+{}};
\endxy
\end{equation}

\begin{Defi}
Let $\g T$ be a Lawvere 2-theory.  Then the underlying category $\g T_{0}$ is a Lawvere theory, called the \emph{underlying Lawvere theory}.
\end{Defi}

The categorical structures of interest to us -- monoidal categories and their symmetric variants -- have an appealing property: they involve, in their definitions, no equations between objects.  For example, the definition of a monoidal category involves isomorphisms $a \otimes (b \otimes c) \cong (a \otimes b) \otimes c$ between objects but not an equality.  This absence of equations between objects is elegantly captured in terms of the underlying theory of a 2-theory: for $\g T$ the 2-theory for monoidal categories the theory $\g T_{0}$ is free on a signature, namely the signature $\Omega$ with $\Omega(0)=1, \Omega(2)=1$ and $\Omega(n)=\emptyset$ otherwise, whose algebras are \emph{pointed magmas}.  The same theory $F\Omega$ underlies the 2-theories for braided and symmetric monoidal categories.  We note the general case now.

\begin{thm}\label{thm:no_eq_on_obj}
A 2-theory $\g T$ presents a categorical structure involving no equations on objects if and only if its underlying theory is free on a signature.
\end{thm}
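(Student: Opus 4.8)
The plan is to reduce the statement to the single claim that a canonical comparison $F\Omega \to \g{T}_0$ is an isomorphism of Lawvere theories, where $\Omega$ is the signature of generating operations of $\g{T}$. First I would pin down the meaning of the phrase \emph{presents a categorical structure involving no equations on objects}: it asserts that $\g{T}$ admits a presentation by a signature $\Omega$ of generating operations, a family of generating 2-cells witnessing the coherence constraints, and equations imposed \emph{only} between 2-cells, never between 1-cells. Under such a presentation every 1-cell of $\g{T}$ is a formal composite of operations from $\Omega$ together with the structural maps coming from $\g{F}^{op}$, which is exactly the data assembled into an $\Omega$-term in the sense of \cref{Defi:free_theory}. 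Via the universal property of the free theory, sending each generator $f \in \Omega(k)$ to the corresponding operation of $\g{T}$ then yields a canonical identity-on-objects morphism of Lawvere theories $F\Omega \to \g{T}_0$.

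For the backward direction I would argue as follows. Supposing $\g{T}_0 \cong F\Omega$, I read off the signature $\Omega$ directly from the free presentation: the 1-cells of $\g{T}$ are precisely the $\Omega$-terms, so no two distinct composite operations are identified and there are no equations between objects at all; every relation defining the structure is carried by the 2-cells (associators, unit constraints, braidings, and so on). Hence $\g{T}$ presents a categorical structure involving no equations on objects. This direction is essentially a matter of unwinding the definition of $F\Omega$ and observing that substitution of terms introduces no relations.

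For the forward direction I would take the presentation with no object equations, let $\Omega$ be its signature of generating operations, and show that the comparison $F\Omega \to \g{T}_0$ above is an isomorphism. \emph{Surjectivity on 1-cells} holds because, by hypothesis on the presentation, every 1-cell is a composite of generating operations and structural maps, hence lies in the image. \emph{Faithfulness} --- that distinct $\Omega$-terms are sent to distinct 1-cells --- is the crux, being precisely the assertion that the presentation imposes no equalities between 1-cells. Since composition in both $F\Omega$ and $\g{T}_0$ is given by substitution, the morphism is then an isomorphism of Lawvere theories.

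The main obstacle I anticipate is this faithfulness step, and more precisely the need to verify that the \emph{two-dimensional} data does not secretly force identifications between 1-cells. One must check that freely adjoining coherence 2-cells and then quotienting by equations \emph{between 2-cells} neither creates new 1-cells nor collapses any of the free term composites, so that the passage $\g{T} \mapsto \g{T}_0$ genuinely returns the free term theory on $\Omega$. I would handle this by analysing the presentation one generating 2-cell at a time, noting that each has source and target among the already-present composite 1-cells, so its adjunction leaves $\g{T}_0$ unchanged, and that imposing equations purely at the level of 2-cells is likewise invisible to $\g{T}_0$. In the braided and symmetric cases the same argument applies verbatim, since the braiding and symmetry enter only as 2-cells and leave the signature $\Omega$ --- and hence $\g{T}_0 = F\Omega$ --- untouched.
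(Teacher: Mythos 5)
Your proposal takes a genuinely different route from the paper: the paper gives no syntactic argument at all, but instead makes the informal phrase ``no equations on objects'' precise as \emph{pie-presentability} of the corresponding strongly finitary 2-monad, quotes Theorem 34 and Proposition 36 of \cite{Bourke2011On-semiflexible} --- a strongly finitary 2-monad on \Cat is pie-presentable exactly when its underlying monad on \Set is free on a signature --- and then transports that result across the equivalence between strongly finitary 2-monads and Lawvere 2-theories. So a correct, self-contained syntactic proof would be new content rather than a rediscovery of the paper's argument.

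However, your argument has a genuine gap, and it sits exactly where you located ``the main obstacle.'' The claims that adjoining a generating 2-cell leaves $\g T_{0}$ unchanged and that imposing equations between 2-cells is ``invisible'' to $\g T_{0}$ are asserted, with a justification that implicitly treats these constructions as happening in \twocat (enlarge or quotient the hom-categories, leaving objects and 1-cells alone). But a presentation of a 2-theory is built from colimits in the category of Lawvere 2-theories (equivalently, of strongly finitary 2-monads on \Cat): freely adjoining a 2-cell is a pushout of 2-theories, whose result must contain all substitution instances and pastings of the new cell and whose universal property refers to theory morphisms rather than 2-functors, and coequifying parallel 2-cells likewise takes place in that category. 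The forgetful functor from 2-theories to 2-categories does not preserve such colimits, so preservation of the underlying Lawvere theory under these steps is not automatic; establishing it is precisely the mathematical content of the theorem, and is what the cited Bourke--Garner analysis accomplishes on the 2-monad side. A complete version of your proof would need explicit constructions of these pushouts and coequifiers of 2-theories (1-cells the $\Omega$-terms, 2-cells formal pastings of substituted generators modulo the generated congruence) together with verification of their universal properties. Two smaller points in the same vein: since the statement is informal, you must also check that your ad hoc syntactic notion of presentation matches the intended precise one (pie-presentability); and your backward direction silently needs the same unproved lemmas, since exhibiting a presentation of $\g T$ by purely 2-cell data over $\Omega$ (say, the canonical one with all 2-cells as generators and all valid equations as relations) requires knowing that this presented theory has underlying theory $F\Omega$ and maps isomorphically to $\g T$.
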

\begin{proof}
The corresponding result for strongly finitary 2-monads was established in Section 6 of \cite{Bourke2011On-semiflexible}.  In that paper the condition of a 2-monad presenting a categorical structure involving no equations between objects was made precise, and called \emph{pie-presentability} because of the relationship with pie colimits.  Taken together, Theorem 34 and Proposition 36 of \cite{Bourke2011On-semiflexible} assert that a strongly finitary 2-monad on $\Cat$ is pie-presentable just when its underlying monad on $\Set$ is \emph{free on a signature}.

Transporting this monad theoretic result across the equivalence between strongly finitary 2-monads on $\Cat$ and Lawvere 2-theories yields the present result.
\end{proof}

In the case that $\g T_{0}$ is free on a signature $\Omega$ we obtain an evident forgetful 2-functor
\[
U:Mod(\g T)_{o} \to Mod(F\Omega)_{o} \cong Mod(\Omega)_{o}
\]
obtained by restricting $\g T$-models and their oplax morphisms along $F \Omega \hookrightarrow \g T$.

\begin{thm}\label{thm:LawvereTheory}
Let $\g T$ be a Lawvere 2-theory whose underlying Lawvere theory $\g T_{0}$ is free on the signature $\Omega$, and let $U:Mod(\g T)_{o} \to Mod(\Omega)_{o}$ denote the forgetful 2-functor to $\Omega$-algebras.  Consider $\g T$-algebra structures $X$ and $Y$ on a category $A$ and an oplax morphism $k:X \rightsquigarrow Y$ over the identity on $A$.  Suppose that $A$ has a factorisation system $(\m E, \m M)$, and further that for each morphism $f \in \Omega(n) \subseteq \g T(n,1)$ the functor $X(f):A^{n} \to  A$ preserves pointwise $\m E$'s and $Y(f): A^{n} \to  A$ preserves pointwise $\m M$'s.

For each $f \in \Omega(n)$ factor the natural transformation $k(f):X(f) \Rightarrow Y(f) \in [ A^{n}, A]$ into
\[
\cd{X(f) \ar@{=>}[r]^{e(f)} & Z(f) \ar@{=>}[r]^{m(f)} & Y(f)}
\]
as pointwise $\m E$ followed by pointwise $\m M$.  The $\Omega$-algebra $Z$ and oplax $\Omega$-algebra maps $e$ and $m$ lift uniquely along $U$ to $\g T$-algebra structures on $A$ and oplax $\g T$-algebra morphisms over the identity on $A$.
\end{thm}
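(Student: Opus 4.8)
The plan is to read the whole construction as a single orthogonal factorisation performed inside functor $2$-categories, leaning at every step on the uniqueness of diagonal fillers. The only new data to be manufactured is the action of $Z$ on the $2$-cells of $\g{T}$: the action of $Z$ on $1$-cells and the component $2$-cells $e(t), m(t)$ of the two icons are already pinned down, for every term $t \in \g{T}(n,1) = T_{\Omega}(n)$, by the given $\Omega$-algebra $Z$ together with the isomorphism $Mod(F\Omega)_{o} \cong Mod(\Omega)_{o}$. The engine of the argument is a pointwise lifting lemma: since $(\m{E}, \m{M})$ is a factorisation system on $A$, in any functor category $[B, A]$ a natural transformation that is pointwise in $\m{E}$ is orthogonal to one that is pointwise in $\m{M}$, the diagonal filler being the pointwise filler whose naturality is itself forced by uniqueness of fillers in $A$.

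First I would show, by induction on the structure of the term $t$, that $e(t) : X(t) \Rightarrow Z(t)$ is pointwise in $\m{E}$ and $m(t) : Z(t) \Rightarrow Y(t)$ is pointwise in $\m{M}$ for every $t$, and along the way that $m(t) \cdot e(t) = k(t)$. The base case $t = x_{i}$ is immediate, the relevant components being identities. For $t = f(t_{1}, \ldots, t_{n})$ with $f \in \Omega(n)$, the pasting \eqref{eq:Omega2} exhibits $e(t)$ as the horizontal composite $e(f) * (\prod_{i} e(t_{i}))$, which by interchange equals $(e(f) * \prod_{i} Z(t_{i})) \cdot (X(f) * \prod_{i} e(t_{i}))$; the first factor is a component of $e(f)$ and so lies in $\m{E}$, while the second is $X(f)$ applied to a transformation that is pointwise in $\m{E}$ by induction, hence lies in $\m{E}$ because $X(f)$ preserves pointwise $\m{E}$'s. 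As $\m{E}$ is closed under composition, $e(t)$ is pointwise in $\m{E}$. The statement for $m(t)$ is proved by the dual decomposition, using instead that $Y(f)$ preserves pointwise $\m{M}$'s; it is here, and only here, that the two preservation hypotheses are used, and for exactly these opposite purposes.

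With this in hand, for each $2$-cell $\sigma : t \Rightarrow t'$ of $\g{T}$ I would define $Z(\sigma) : Z(t) \Rightarrow Z(t')$ to be the unique diagonal filler in $[A^{n}, A]$ of the square with left leg $e(t)$, right leg $m(t')$, top edge $e(t') \cdot X(\sigma)$ and bottom edge $Y(\sigma) \cdot m(t)$. This square commutes precisely because $k = m \cdot e$ is an icon of $\g{T}$-models, so that its naturality axiom at $\sigma$ reads $k(t') \cdot X(\sigma) = Y(\sigma) \cdot k(t)$; the pointwise lemma, applied to the pointwise $\m{E}$-map $e(t)$ and the pointwise $\m{M}$-map $m(t')$, then produces a unique such $Z(\sigma)$. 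The two triangle identities of the filler are exactly the naturality axioms making $e$ and $m$ into icons of $\g{T}$-models at $\sigma$, so $e$ and $m$ become oplax $\g{T}$-morphisms by construction; moreover $Z(\sigma)$ is forced by this requirement, which yields the uniqueness of the entire lifted triple $(Z, e, m)$.

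The remaining task, and the one I expect to be the main obstacle, is to verify that $\sigma \mapsto Z(\sigma)$ is the $2$-cell action of a finite-power-preserving $2$-functor. Preservation of identity $2$-cells and of vertical composites is immediate from uniqueness of fillers, since $\id_{Z(t)}$ and $Z(\sigma') \cdot Z(\sigma)$ are each at once seen to fill the relevant defining square. The delicate point is compatibility with whiskering, hence with horizontal composition: for a $2$-cell $\sigma : t \Rightarrow t'$ and a $1$-cell $s$ one must prove $Z(\sigma * s) = Z(\sigma) * Z(s)$, and the natural way to do so is to show that $Z(\sigma) * Z(s)$ fills the square defining $Z(\sigma * s)$. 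This computation uses interchange together with the fact that $e$ and $m$, being icons, satisfy $e(t \circ s) = e(t) * e(s)$ and similarly for $m$, and is the genuinely fiddly part of the argument. Once these whiskering identities are checked, $Z$ is a $2$-functor; it strictly preserves finite powers of $1$ because its $1$-cell part is the power-preserving $F\Omega$-model $Z$ and the pointwise factorisation lemma is compatible with products in the target. This establishes existence, uniqueness having already been secured.
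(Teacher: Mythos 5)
Your proposal is correct and takes essentially the same route as the paper's proof: extend $Z$, $e$, $m$ uniquely along $Mod(F\Omega)_{o} \cong Mod(\Omega)_{o}$, show by induction on terms (using interchange and the two preservation hypotheses) that $e(t)$ is pointwise $\m E$ and $m(t)$ pointwise $\m M$, define $Z$ on 2-cells of $\g T$ as the unique diagonal fillers forced by requiring $e$ and $m$ to be oplax $\g T$-morphisms, and deduce 2-functoriality and strict preservation of powers from uniqueness of fillers and the underlying $F\Omega$-model. Your explicit tracking of $m(t)\cdot e(t)=k(t)$ and your reduction of horizontal composition to whiskering plus vertical composition are only minor presentational variants of what the paper does.
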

\begin{proof}
The $\Omega$-algebra structure $Z$ extends uniquely to an $F\Omega$-algebra on $A$.  As described in \eqref{eq:Omega1} this has value at a term $f(t_{1}, \ldots, t_{n}) \in F\Omega(n,1)$ given inductively by
\begin{equation*}
\cd{A^{m_{1}+\ldots +m_{n}} \ar[rrr]^{Z(t_{1}) \times \ldots \times Z(t_{n})}&&&  A^{n} \ar[rr]^{Z(f)} && A.}
\end{equation*}
Likewise the oplax morphisms of $\Omega$-algebras $e$ and $m$ extend uniquely to oplax morphisms of $F\Omega$-algebras $e:X \to Z$ and $m:Z \to Y$ over $ A$ according to \eqref{eq:Omega2}.

We will show that for all $t \in F\Omega(n,m)$ the natural transformations
\[
e(t):X(t) \Rightarrow Z(t) \in [A^{n},A^{m}], \quad m(t):Z(t) \Rightarrow Y(t) \in [A^{n},A^{m}]
\]
are pointwise $\m E$ and pointwise $\m M$, respectively.  Let us consider first $e$.  It clearly suffices to consider the case $m=1$ and for $t \in F\Omega(n,1)$ we can argue inductively. At a variable $x_{i} \in F\Omega(n,1)$ the natural transformation $e(x_{i}):X(x_{i}) \Rightarrow Z(x_{i}) \in [ A^{n}, A]$ is the identity on the $i$'th product projection.  This is certainly pointwise $\m E$.  At a term $f(t_{1}, \ldots, t_{n})$ the natural transformation
\[
e(f(t_{1},\ldots t_{n})):X(f(t_{1},\ldots t_{n})) \Rightarrow Z(f(t_{1},\ldots ,t_{n}))
\]
is defined as the composite below.
\begin{equation*}
\xy
(0,0)*+{ A^{m_{1} + \ldots m_{n}}}="00";(50,0)*+{ A^{n}}="10";(100,0)*+{ A}="20";
{\ar@/^3.5ex/^{X(t_{1}) \times \cdots \times  X(t_{n})} "00"; "10"}; {\ar@/_3.5ex/_{Z(t_{1}) \times \cdots \times Z(t_{n})} "00"; "10"};
{\ar@/^3ex/^{X(f)} "10"; "20"}; {\ar@/_3ex/_{Z(f)} "10"; "20"};
{\ar@{=>}|{e(t_{1}) \times \cdots  \times e(t_{n})}(25,5)*+{};(25,-5)*+{}};
{\ar@{=>}^{e(f)}(75,4)*+{};(75,-4)*+{}};
\endxy
\end{equation*}
Observe that if each $e(t_{i})$ is pointwise $\m E$ then so is their product.  By assumption $X(f)$ preserves pointwise $\m E$'s so that the whiskering $X(f) \circ (e(t_{1}) \times \cdots \times e(t_{n}))$ is pointwise $E$, whilst the whiskering $e(f) \circ (Z(t_{1}) \times \cdots \times Z(t_{n}))$ is trivially pointwise $E$.  That  $e(f(t_{1},\ldots t_{n}))$, the composite of these two, is pointwise $\m E$ now follows from the fact that the class $\m E$ is closed under composition.   Similarly the oplax morphism $m:Z \to Y$ of $F\Omega$-algebras is pointwise $\m M$ in each component.

It remains to describe the action of $Z$ on 2-cells of $\g T$.  So consider a 2-cell $\theta:s \Rightarrow t$.  In order for the oplax $F\Omega$-algebra maps $e$ and $m$ to be oplax $\g T$-algebra maps we require exactly that the left and right squares below respectively commute.
\[
\xy
(0,0)*+{X(s)}="00"; (20,0)*+{Z(s)}="10";(40,0)*+{Y(s)}="20";
(0,-20)*+{X(l)}="01";(20,-20)*+{Z(l)}="11";(40,-20)*+{Y(l)}="21";
{\ar@{=>}^{e(s)} "00"; "10"};
{\ar@{=>}_{X(\theta)} "00"; "01"};
{\ar@{=>}|{Z(\theta)} "10"; "11"};
{\ar@{=>}_{e(t)} "01"; "11"};
{\ar@{=>}^{m(s)} "10"; "20"};
{\ar@{=>}_{m(t)} "11"; "21"};
{\ar@{=>}^{Y(\theta)} "20"; "21"};
\endxy
\]
This forces us to define $Z(\theta)$ as the unique filler, whose existence is guaranteed by the fact that  $e(s)$ is pointwise $\m E$ and $m(s)$ pointwise $\m M$.  That $Z$ preserves vertical composition of 2-cells follows easily from the uniqueness of diagonal fillers.  Given horizontally composable 2-cells $\theta:s \Rightarrow t$ and $\theta^{\prime}:s^{\prime} \Rightarrow t^{\prime}$ consider the following diagrams.
\[
\xy
(0,0)*+{Xs^{\prime}Xs}="00"; (26,0)*+{Zs^{\prime}Zs}="10";(52,0)*+{Ys^{\prime}Ys}="20";
(0,-20)*+{Xt^{\prime}Xt}="01";(26,-20)*+{Zt^{\prime}Zt}="11";(52,-20)*+{Yt^{\prime}Yt}="21";
{\ar@{=>}^{e(s^{\prime})e(s)} "00"; "10"};
{\ar@{=>}|{X(\theta^{\prime})X(\theta)} "00"; "01"};
{\ar@{=>}|{Z(\theta^{\prime})Z(\theta)} "10"; "11"};
{\ar@{=>}_{e(t^{\prime})e(t)} "01"; "11"};
{\ar@{=>}^{m(s^{\prime})m(s)} "10"; "20"};
{\ar@{=>}_{m(t^{\prime})m(t)} "11"; "21"};
{\ar@{=>}|{Y(\theta^{\prime})Y(\theta)} "20"; "21"};
\endxy
\hspace{0.45cm}
\xy
(0,0)*+{X(s^{\prime}s)}="00"; (22,0)*+{Z(s^{\prime}s)}="10";(44,0)*+{Y(s^{\prime}s)}="20";
(0,-20)*+{X(t^{\prime}t)}="01";(22,-20)*+{Z(t^{\prime}t)}="11";(44,-20)*+{Y(t^{\prime}t)}="21";
{\ar@{=>}^{e(s^{\prime}s)} "00"; "10"};
{\ar@{=>}|{X(\theta^{\prime}\theta)} "00"; "01"};
{\ar@{=>}|{Z(\theta^{\prime}\theta)} "10"; "11"};
{\ar@{=>}_{e(t^{\prime}t)} "01"; "11"};
{\ar@{=>}^{m(s^{\prime}s)} "10"; "20"};
{\ar@{=>}_{m(t^{\prime}t)} "11"; "21"};
{\ar@{=>}|{Y(\theta^{\prime}\theta)} "20"; "21"};
\endxy
\]
All components on the outsides agree by functoriality of $X$ and $Y$ on 1-cells and 2-cells, $Z$ on 1-cells and using that $e$ and $m$ are oplax $F\Omega$-algebra maps on $ A$; therefore by the uniqueness of diagonal fillers the diagonals also agree.  Therefore $Z$ is a 2-functor and strictly preserves finite powers of $1$ because its underlying functor, an $F\Omega$-algebra, does so.

\end{proof}

\begin{proof}[Proof of Theorem~\ref{thm:factormonoidal}]
Let $\g T$ be the 2-theory for monoidal categories.  $\g T_{0}$ is free on the signature $\Omega$ for pointed magmas.  We have an isomorphism of 2-categories $MonCat_{o} \cong Mod(\g T,\Cat)_{o}$ which commutes with the respective forgetful 2-functors to $Mod(\Omega,\Cat)_{o}$.  The part of Theorem~\ref{thm:factormonoidal} concerning monoidal structure concerns the forgetful 2-functor $MonCat_{o} \to Mod(\Omega,\Cat)_{o}$, but this corresponds exactly to Theorem~\ref{thm:LawvereTheory}.  For the parts concerning symmetric or braided structures, we replace $\g T$ by the appropriate 2-theory.
\end{proof}

\section{The cartesian, funny and Gray tensor products}
We are interested in studying monoidal structures on \twocat, the category of small 2-categories and 2-functors.  First, it is a cartesian closed category: given 2-categories $A$ and $B$ we denote their cartesian product by $A \times B$ and we write $[A,B]$ for the corresponding internal hom.
\begin{Defi}
The 2-category $[A,B]$ has objects 2-functors $F:A \to B$, 1-cells 2-natural transformations between them and 2-cells modifications between those.
\end{Defi}

These 2-natural transformations are the strictest possible kind of transformation between 2-functors.  On the other end of the spectrum, we can consider the notion of transformation satisfying the fewest axioms.

\begin{Defi}
Let $F,G:A \to B$ be 2-functors.  A \emph{transformation} $\alpha:F \Rightarrow G$ consists of a 1-cell $\alpha_{x}:Fx \to Gx$ for each object $x \in A$, subject to no axioms.
\end{Defi}

Transformations are the 1-cells in another closed structure on \twocat.

\begin{Defi}
The 2-category $[A,B]_{f}$ has objects 2-functors $F:A \to B$ and hom-categories given by
\[
[A,B]_{f}(F,G)=\Pi_{x\in A}B(Fx,Gx).
\]
\end{Defi}

If transformations are the 1-cells in $[A,B]_{f}$, then the 2-cells between them are once again given by a family of 2-cells
\[
\Gamma_{x}:\alpha_{x} \Rightarrow \beta_{x},
\]
indexed by the objects of $A$, and once again subject to no axioms.

At a 2-category $D$ let $obD$ be the discrete 2-category with the same set of objects as $D$ and let $i:obD \to D$ be the evident inclusion.  Observe that to give $A \to [B,C]_{f}$ is equally to give a pair of maps $obA \to [B,C]$ and $A \to [obB,C]$ rendering commutative the diagram left below
\[
\cd{obA \ar[r] \ar[d]_{i} & [B,C] \ar[d]^{i^*{}} & & obA \times ob B \ar[r]^{1 \times i} \ar[d]_{i \times 1} & obA \times B \ar[d]\\
A \ar[r] & [obB,C] && A \times ob B \ar[r] & C}
\]
and that, by adjointness, these in turn correspond to a pair of maps $obA \times B \to C$ and $A \times obB \to C$ rendering commutative the right diagram.

\begin{Defi}\label{Defi:funny}
The \emph{funny tensor product} $A \star B$ is the pushout
\[
\cd{obA \times ob B \ar[r]^{1 \times i} \ar[d]_{i \times 1} & obA \times B \ar[d]\\
A \times ob B \ar[r] & A \star B \hspace{0.2cm} .}
\]
\end{Defi}

By the preceding discussion the funny tensor product $A \star B$ is characterised by a natural isomorphism
\[
\twocat(A \star B,C) \cong \twocat(A,[B,C]_{f}).
\]
Using the pushout description, it is a simple exercise using universal properties to show that the funny tensor product is symmetric monoidal with unit the terminal 2-category.

At 2-categories $A$ and $B$ we have the inclusion
\begin{equation}\label{eq:j}
J_{A,B}:[A,B] \to [A,B]_{f}
\end{equation}
that forgets naturality.  Because these inclusions are natural in both $A$ and $B$, and because the vertical arrows in the diagram below are isomorphisms,
\[
\xy
(0,0)*+{\twocat(A,[B,C])}="00"; (50,0)*+{\twocat(A,[B,C]_{f})}="10";
(0,-15)*+{\twocat(A \times B,C)}="01"; (50,-15)*+{\twocat(A \star B,C)}="11";
{\ar^{\twocat(A,J)} "00";"10"};
{\ar ^{}"10";"11"};
{\ar _{}"00";"01"};
{\ar@{.>} _{\twocat(K,C)} "01";"11"};
\endxy
\]
the Yoneda lemma induces a unique $K:A \star B \to A \times B$, natural in 2-functors in both variables, such that the above diagram commutes.  Another description of $K:A \star B \to A \times B$ is as the unique map from the pushout $A \star B$ corresponding to the commutative square below.
\[
\xy
(0,0)*+{obA \times obB}="00"; (25,0)*+{obA \times B}="10";
(0,-15)*+{A \times obB}="01"; (25,-15)*+{A\times B}="11";
{\ar^{} "00";"10"};
{\ar ^{}"10";"11"};
{\ar _{}"00";"01"};
{\ar _{} "01";"11"};
\endxy
\]
Using this latter description it is easily seen that the comparison 2-functors $K$ yield the structure
\[
(1,K):(\twocat,\star) \to (\twocat,\times)
\]
of a symmetric oplax structure on the identity functor $1:\twocat \to \twocat$ (see \cref{rem:oplax-id}).

\begin{Defi}\label{Defi:pseudo_trans}
A \emph{pseudonatural transformation} is an oplax transformation $\alpha$ such that the 2-cells $\alpha_{f}$ are invertible for each 1-cell $f$.
\end{Defi}

Between pseudonatural transformations $\eta$ and $\mu$ are modifications; such are specified by a family of 2-cells $\Gamma_{x}:\eta_{x} \Rightarrow \mu_{x}$ compatible with $\eta$ and $\mu$.  For the details we refer the reader to  \cite{Benabou1967Introduction}.

\begin{Defi}
The 2-category $Ps(A,B)$ is defined to have objects the 2-functors $F:A \to B$, 1-cells the pseudonatural transformations between them and 2-cells the modifications between those.
\end{Defi}
One must verify that this is actually a 2-category, but composition of 1-cells is inherited directly from the composition of both 1- and 2-cells in the target, hence is strictly associative and unital.

\begin{Defi}
The \emph{Gray tensor product}, $A \otimes B$, of a pair of 2-categories $A,B$ is a representing object for the functor $\twocat(A,Ps(B,-)): \twocat \to \Set$.
\end{Defi}

As a representing object, $A \otimes B$ comes with an isomorphism
\[
\twocat(A \otimes B,C) \cong \twocat(A,Ps(B,C))
\]
natural in $C$.  To verify that such a representation of $A \otimes B$ indeed exists one can describe a presentation of it in terms of generators and relations.  In the lax setting such a presentation was first described in Theorem I.4.9 of \cite{Gray1974Formal}.  A detailed presentation in the pseudo case of interest here is given in Section 3.1 \cite{Gurski2013Coherence}.  For completeness we give an example of an argument avoiding presentations.  We will use the following fact.

\begin{lem}\label{lem:left_adj_lfp} Let $U:A \to B$ a functor between locally presentable categories and let the set of objects $\{b_{i} \in B:i \in I\}$ form a strong generator in $B$.  Then the following are equivalent.
\begin{enumerate}
\item $U$ has a left adjoint.
\item Each $B(b_{i},U-)$ is representable.
\item Each $B(b_{i},U-)$ has a left adjoint.
\end{enumerate}
\end{lem}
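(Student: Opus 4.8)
The plan is to establish $(1)\Rightarrow(2)$ and $(2)\Rightarrow(1)$, together with the equivalence $(2)\Leftrightarrow(3)$, which between them yield all three equivalences; I would treat $(2)\Leftrightarrow(3)$ as a soft general fact and concentrate the real work in $(2)\Rightarrow(1)$. The implication $(1)\Rightarrow(2)$ is immediate: if $F\dashv U$ then $B(b_i,U-)\cong A(Fb_i,-)$, so $Fb_i$ represents. For $(2)\Leftrightarrow(3)$ I would use that a $\Set$-valued functor on a cocomplete category is representable if and only if it admits a left adjoint: a representation $B(b_i,U-)\cong A(a_i,-)$ yields the left adjoint $S\mapsto S\cdot a_i$ (the $S$-fold copower, which exists since $A$ is cocomplete), while conversely a left adjoint $L_i$ forces $B(b_i,U-)\cong A(L_i(1),-)$ by evaluating the adjunction isomorphism at the one-point set. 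Here the hypothesis that $A$ is locally presentable is used only through its cocompleteness.

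The substance is $(2)\Rightarrow(1)$, and my strategy is to verify the two conditions of the adjoint functor theorem for locally presentable categories: that $U$ preserves small limits and that $U$ is accessible. The single idea driving both verifications is that a strong generator $\{b_i\}$ is \emph{jointly conservative}, so it suffices to check that the relevant canonical comparison maps become invertible after applying each $B(b_i,-)$. For limits this is immediate: writing $B(b_i,U-)\cong A(a_i,-)$, the functor $B(b_i,U-)$ preserves all small limits because representables do, and this identifies $B(b_i,-)$ applied to the comparison $U(\lim_{j} d_{j})\to\lim_{j} U d_{j}$ with an identity, whence joint conservativity forces the comparison itself to be invertible. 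For accessibility I would first choose a regular cardinal $\mu$ large enough that every $a_i$ is $\mu$-presentable in $A$ and every $b_i$ is $\mu$-presentable in $B$ — possible since $\{b_i\}$, and hence $\{a_i\}$, is a set — so that each $B(b_i,U-)\cong A(a_i,-)$ preserves $\mu$-filtered colimits and each $B(b_i,-)$ does too. Running the same conservativity argument on the comparison $\colim_{j} U d_{j}\to U(\colim_{j} d_{j})$ for $\mu$-filtered $j\mapsto d_{j}$ then shows that $U$ preserves $\mu$-filtered colimits.

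With $U$ shown to be continuous and accessible between locally presentable categories, the adjoint functor theorem for such categories produces the desired left adjoint, completing $(2)\Rightarrow(1)$. The main obstacle is conceptual rather than computational: one must transfer the good behaviour of $U$ from the generating objects $b_i$ to all of $B$, and the essential input is precisely that a \emph{strong} generator (not merely a generator) is jointly conservative, together with the fact that locally presentable categories supply the framework in which continuity plus accessibility is equivalent to being a right adjoint. The one point demanding care is the choice of $\mu$: it must simultaneously control the presentability ranks of the $a_i$ and of the $b_i$, since the argument uses $\mu$-presentability of $b_i$ in $B$ to commute $B(b_i,-)$ past the colimit and $\mu$-presentability of $a_i$ in $A$ to know that $B(b_i,U-)$ preserves it.
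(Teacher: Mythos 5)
Your proposal is correct and takes essentially the same route as the paper: the paper's (outlined) proof packages your componentwise argument into the single conservative right adjoint $B(b,-):B \to \textnormal{Set}^{I}$ and invokes exactly the two facts you use, namely that right adjoints between locally presentable categories are precisely the limit-preserving accessible functors, and that a conservative functor reflects whatever (co)limits it preserves. Your write-up simply unfolds that outline in detail, including the choice of a common regular cardinal $\mu$ bounding the presentability ranks of the $a_i$ and the $b_i$, which is the same bookkeeping implicit in the paper's appeal to ``$\lambda$-filtered colimits for some regular cardinal $\lambda$.''
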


For the uninitiated let us point out that locally presentable categories capture those categories of structures describable using limit-theories: so small categories or 2-categories, for instance, both of which can be defined using only pullbacks.  A standard reference is \cite{Adamek1994Locally}. A set of objects $\{b_{i}:i \in I\}$ is said to form a \emph{strong generator} if the representable functors $B(b_{i},-)$ are jointly conservative.  In $\twocat$ the three objects $\{S_{i}:i =0,1,2\}$  consisting of the free 0-cell, 1-cell and 2-cell form a strong generator.

\begin{proof}[Proof of \cref{lem:left_adj_lfp}]
We outline the straightforward proof.  Certainly $(1 \implies 2)$ by the definition of an adjunction whilst $(2 \iff 3)$ since $B$ is cocomplete.  For $(2 \implies 1)$ the main point is that if $F,G$ and $H$ are functors between locally presentable categories with $FG = H$, $H$ a right adjoint and $F$ a conservative right adjoint, then $G$ is a right adjoint.  This follows from the fact that right adjoints between locally presentable categories are the limit preserving functors that preserve $\lambda$-filtered colimits for some regular cardinal $\lambda$, together with the fact that conservative functors reflects any limits or colimits that they preserve.  Then take $F=B(b,-):B \to \Set^{I}$ and $G=U$.
\end{proof}

\begin{prop}\label{prop:otimes_exists}
The functor $Ps(A,-)$ has a left adjoint for all 2-categories $A$, and hence the Gray tensor product $A \otimes B$ exists.
\end{prop}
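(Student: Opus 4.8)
The plan is to apply \cref{lem:left_adj_lfp} to the functor $U = Ps(A,-) : \twocat \to \twocat$. Since \twocat is locally presentable and the free cells $\{S_0, S_1, S_2\}$ form a strong generator, the lemma reduces the existence of a left adjoint to checking that each of the three functors $\twocat(S_i, Ps(A,-)) : \twocat \to \Set$ is representable. By the definition of the $S_i$, the functor $\twocat(S_i, Ps(A,C))$ picks out the $i$-cells of $Ps(A,C)$: for $i=0$ the 2-functors $A \to C$, for $i=1$ the pseudonatural transformations between them, and for $i=2$ the modifications.

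The case $i=0$ is immediate, since $\twocat(S_0, Ps(A,C))$ is the set of objects of $Ps(A,C)$, namely $\twocat(A,C)$, which is represented by $A$ itself. For $i=1$ and $i=2$ I would establish representability without exhibiting a presentation of the representing object, by writing these functors as small limits of representables. The key formal point is that the contravariant assignment $S \mapsto \twocat(S,-)$ sends colimits in \twocat to limits in $[\twocat,\Set]$, so that any small limit of representables $\twocat(S_j,-)$ is again representable, namely by $\colim_j S_j$; this uses only that \twocat is cocomplete.

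It therefore suffices to assemble the functor of pseudonatural transformations as such a limit. Unwinding \cref{Defi:pseudo_trans}, a pseudonatural transformation with variable target $C$ consists of a source and target $F, G \in \twocat(A,C)$, a 1-cell $\alpha_a$ of $C$ for each object $a$ of $A$, and an invertible 2-cell $\alpha_f$ of $C$ for each 1-cell $f$ of $A$, subject to the three oplax-transformation axioms. Each piece of data is the value at $C$ of a representable: the $\alpha_a$ use the free 1-cell $S_1$, while the $\alpha_f$ use the free invertible 2-cell (the 2-category with an isomorphism between two parallel 1-cells), which builds the invertibility in rather than imposing it afterwards. The prescriptions on sources and targets, together with the three axioms, are equalizer and pullback conditions relating these representables. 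Since $A$ is small, the resulting diagram is small, so the functor is a small limit of representables and hence representable; the same bookkeeping, now using the free 2-cell $S_2$ for the components $\Gamma_a$, handles the modifications in the case $i=2$. With all three functors representable, \cref{lem:left_adj_lfp} yields the left adjoint to $Ps(A,-)$, and evaluating this adjoint produces the representing objects $A \otimes B$.

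I expect the main obstacle to be the bookkeeping in the case $i=1$: one must verify that the source and target constraints and the coherence axioms for a pseudonatural transformation genuinely cut out the desired set as a small limit of the chosen representables, and in particular that invertibility of the $\alpha_f$ is correctly encoded by passing to the free invertible 2-cell rather than the free 2-cell $S_2$. Once this is in place, representability follows formally from the cocompleteness of \twocat, so that no generators-and-relations description of $A \otimes B$ is ever needed.
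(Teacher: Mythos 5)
Your proposal is correct, and after the shared first step it takes a genuinely different route from the paper. Both arguments begin by applying \cref{lem:left_adj_lfp} to $Ps(A,-):\twocat \to \twocat$, reducing the claim to representability of the three functors $\twocat(S_{i},Ps(A,-)):\twocat \to \Set$. At that point the paper plays a symmetry trick: using the natural isomorphism $\twocat(B,Ps(A,-)) \cong \twocat(A,Ps(B,-))$ it trades these functors for $\twocat(A,Ps(S_{i},-))$, and a second application of the lemma (now to $Ps(S_{i},-)$) reduces everything to the nine functors $\twocat(S_{j},Ps(S_{i},-))$, each of which is represented by a concrete finite 2-category that one can simply draw (for instance the pseudo-commutative square when $i=j=1$). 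You never swap the variables; instead you handle $\twocat(S_{i},Ps(A,-))$ directly for an arbitrary small $A$, writing it as a small limit of representables -- products indexed by the cells of $A$ for the data $(F,G,\alpha_{a},\alpha_{f})$, equalizers for the source/target matching and the coherence axioms -- and invoking the fact that $S \mapsto \twocat(S,-)$ turns colimits in the cocomplete category \twocat into limits in $[\twocat,\Set]$, so that the limit is represented by the corresponding colimit. Your choice of the free-standing \emph{invertible} 2-cell for the components $\alpha_{f}$ is exactly the right move, since invertibility of a 2-cell classified by $S_{2}$ is not an equational condition and must either be built into the representing object or encoded as an inverse together with equations. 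As for what each route buys: the paper's needs the swap isomorphism (a routine but genuine verification about pseudonatural transformations) and a second pass through the lemma, but ends with finitely many explicit finite representing objects and no infinite diagrams; yours is a single uniform formal argument requiring neither the symmetry nor any explicitly described representing 2-category, at the price of heavier (though routine) bookkeeping, and in effect it constructs the representing objects of $\twocat(S_{1},Ps(A,-))$ and $\twocat(S_{2},Ps(A,-))$ -- which are $A \otimes S_{1}$ and $A \otimes S_{2}$ -- as colimits, a presentation obtained formally rather than verified by hand, so still within the paper's stated aim of avoiding generators-and-relations arguments.
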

\begin{proof}
In order to show that $Ps(A,-)$ has a left adjoint it suffices, by \cref{lem:left_adj_lfp}, to show that each $\twocat(S_{i},Ps(A,-)):\twocat \to \Set$ is representable.  Now it is straightforward to show that there is a natural isomorphism 
\[
\twocat(B,Ps(A,-)) \cong \twocat(A,Ps(B,-))
 \]
 for all $B$.  Accordingly we must show that $\twocat(A,Ps(S_{i},-)):\twocat \to \Set$ is representable or, equivalently, has a left adjoint.  By the lemma this is true just when each $\twocat(S_{j},Ps(S_{i},-)):\twocat \to \Set$ is representable for $i,j \in \{0,1,2\}$.  Each of the nine cases is obvious.  For $i=0$ we have $\twocat(S_{j},Ps(S_{0},-)) \cong \twocat(S_{j},-)$.  For $i=1$ the representing objects are $S_{1}$, the pseudo-commutative square as depicted left below
$$\xy
(0,0)*+{a}="00";(15,0)*+{b}="10";(8,-7)*+{\cong^{\theta}};
 (0,-15)*+{c}="01";(15,-15)*+{d}="11";
{\ar^{r} "00"; "10"};
{\ar_{f} "00"; "01"};
{\ar_{s} "01"; "11"};
{\ar^{g} "10"; "11"};
\endxy
\hspace{2.5cm}
\xy
(0,0)*+{a}="00";(15,0)*+{b}="10";(12,-7)*+{\cong^{\theta^{\prime}}};
 (0,-15)*+{c}="01";(15,-15)*+{d}="11";
{\ar^{r} "00"; "10"};
{\ar@/_1pc/_{f} "00"; "01"};
{\ar@/^1pc/^{f^{\prime}} "00"; "01"};
{\ar_{s} "01"; "11"};
{\ar@/^1pc/^{g^{\prime}} "10"; "11"};
{\ar@{=>}^{\alpha}(-2,-7)*{};(2,-7)*{}};
\endxy
\hspace{0.1cm}
\xy
(0,-8)*+{=};
\endxy
\hspace{0.1cm}
\xy
(0,0)*+{a}="00";(15,0)*+{b}="10";(4,-7)*+{\cong^{\theta}};
 (0,-15)*+{c}="01";(15,-15)*+{d}="11";
{\ar^{r} "00"; "10"};
{\ar@/_1pc/_{f} "00"; "01"};
{\ar_{s} "01"; "11"};
{\ar@/_1pc/_{g} "10"; "11"};
{\ar@/^1pc/^{g^{\prime}} "10"; "11"};
{\ar@{=>}^{\beta}(13,-7)*{};(17,-7)*{}};
\endxy$$
and the 2-category depicted, in its entirety, above right.  The case $i=2$ is left to the interested reader.
\end{proof}

Each 2-natural transformation can be viewed as a pseudonatural transformation with identity 2-cell components, and each pseudonatural transformation can be viewed as a transformation by forgetting its 2-cell components.  Correspondingly, we have a commutative triangle of natural maps
\[
\xy
(0,0)*+{[B,C]}="00"; (30,0)*+{Ps(B,C)}="10";
(30,-15)*+{[B,C]_{f}}="11";
{\ar^{J_{1}} "00";"10"};
{\ar ^{J_{2}}"10";"11"};
{\ar _{J}"00";"11"};
\endxy
\]
in which the composite is postcomposition by the inclusion $J$ considered in \cref{eq:j}.  At a 2-category $C$ we thus obtain the top row of
\[
\cd{\twocat(A,[B,C]) \ar[d] \ar[r]^{(J_{1})_{*}} & \twocat(A,Ps(B,C)) \ar[d] \ar[r]^{(J_{2})_{*}} & \twocat(A,[B,C]_{f}) \ar[d]\\
\twocat(A \times B,C) \ar[r]_{Q^{*}} & \twocat(A \otimes B,C) \ar[r]_{P^{*}} & \twocat(A \star B,C)}
\]
in which the vertical arrows are isomorphisms natural in $C$.  On the bottom row are the unique functions making the diagram commute.  Since these are natural in $C$ they are uniquely induced by maps $P:A \star B \to A \otimes B$ and $Q:A \otimes B \to A \times B$.  Since the composite bottom row is given by precomposition with $K$ we obtain a factorisation
\begin{equation}\label{eq:PQ}
\xy
(0,0)*+{A \star B}="00"; (30,0)*+{A \otimes B}="10";
(30,-15)*+{A \times B}="11";
{\ar^{P} "00";"10"};
{\ar ^{Q}"10";"11"};
{\ar _{K}"00";"11"};
\endxy
\end{equation}
of $K:A \star B \to A \times B$.  It is this factorisation that we will show fits into the framework of \cref{thm:factormonoidal}, hence giving a new proof that the Gray tensor product is part of a closed symmetric monoidal structure.

\section{Factorisation systems on \twocat}
The aim of this section is to study the 2-functors $P:A \star B \to A \otimes B$ and $Q:A \otimes B \to A \times B$, and to show that they belong, respectively, to the left and right classes of a factorisation system on \twocat.  The factorisation system on \twocat that we will use is the $(boba/lff)$-factorisation system.  The left class consists of those 2-functors which induce isomorphisms on underlying categories -- are \textbf{b}ijective on \textbf{o}bjects and \textbf{b}ijective on \textbf{a}rrows -- whilst the right class consists of those $F:A \to B$ for which each functor $F_{x,y}:A(x,y) \to B(Fx,Fy)$ is full and faithful.  We leave it to the reader to verify that these classes of maps constitute an orthogonal factorisation system.

The two classes of a factorisation system $(\m E, \m M)$ on $C$ always satisfy certain stability properties.  One property is that whenever $gf \in \m E$ and $f \in \m E$ then $g \in \m E$.  Sometimes, as in the above example, one also has that $gf \in \m E$ and $g \in \m E$ imply $f \in \m E$, so that the left class satisfies \emph{2 from 3.}

\begin{Defi}
A factorisation system $(\m E, \m M)$ is a \emph{reflective factorisation system} \cite{Cassidy1985Reflective} if $ C$ has a terminal object $1$ and the maps in $\m E$ satisfy 2 from 3.
\end{Defi}

The important point for us is that in such a system, membership of $\m E$ can be tested more easily.
\begin{lem}\label{lem:reflective_fs}
Let $(\m E, \m M)$ be a reflective factorisation system on $ C$ and $f:a \to b$ be a morphism  in $ C$.  Then $f \in \m E$ so long as $ C(f,x):C(b,x) \to  C(a,x)$ is invertible for each $x$ such that $x \to 1 \in \m M$.
\end{lem}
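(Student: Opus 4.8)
The plan is to avoid testing $f\in\m E$ against all of $\m M$ directly, and instead to \emph{reflect} $f$ onto the full subcategory of those objects $x$ for which $(x\to 1)\in\m M$ --- call these the \emph{local} objects --- and to show that this reflection of $f$ is an isomorphism. The $2$-from-$3$ property of the reflective system will then promote that isomorphism to the statement $f\in\m E$. Everything before the final step works for an arbitrary orthogonal factorisation system, so the reflectivity hypothesis is used in exactly one place.

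The one preliminary I would establish is that every $\m E$-map induces bijections on hom-sets into local objects. Indeed, if $e:a\to c$ lies in $\m E$ and $x$ is local, then for any $h:a\to x$ the square with top edge $h$, left edge $e$, right edge $x\to 1$ and bottom edge $c\to 1$ commutes (both legs are the unique map to $1$); since $e\perp(x\to 1)$, it has a unique diagonal filler $c\to x$. Existence and uniqueness of this filler say precisely that $C(e,x):C(c,x)\to C(a,x)$ is a bijection.

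Next I would factor the canonical maps to the terminal object as $a\xrightarrow{\eta_a}Ra\to 1$ and $b\xrightarrow{\eta_b}Rb\to 1$, so that $\eta_a,\eta_b\in\m E$ while the $\m M$-parts witness $Ra,Rb$ as local. Orthogonality of $\eta_a\in\m E$ against $(Rb\to 1)\in\m M$ produces a unique map $Rf:Ra\to Rb$ satisfying $Rf\circ\eta_a=\eta_b\circ f$. Applying $C(-,x)$ to this equation for a local $x$, and using that $C(\eta_a,x)$ and $C(\eta_b,x)$ are bijections (by the preliminary) together with the hypothesis that $C(f,x)$ is a bijection, I conclude that $C(Rf,x)$ is a bijection for every local $x$. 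Since $Ra$ and $Rb$ are themselves local, the Yoneda lemma applied inside the full subcategory of local objects forces $Rf$ to be an isomorphism: taking $x=Ra$ yields a one-sided inverse, and taking $x=Rb$ shows it is two-sided.

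Finally, because $Rf$ is an isomorphism and $\eta_a\in\m E$, the composite $\eta_b\circ f=Rf\circ\eta_a$ is a composite of $\m E$-maps and hence lies in $\m E$; as $\eta_b\in\m E$ as well, the $2$-from-$3$ property gives $f\in\m E$, as desired. I expect the main obstacle to be the bookkeeping in the third step --- correctly translating the hom-set hypothesis into invertibility of $Rf$, i.e. verifying that $Rf$ is genuinely produced by a single orthogonal lift and that the induced maps on hom-sets compose in the right order --- while the conceptually load-bearing point, and the only use of the reflectivity assumption, is the concluding $2$-from-$3$ deduction.
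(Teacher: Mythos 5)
Your proof is correct, but it takes a genuinely different route from the paper: the paper does not argue the lemma at all, it simply cites Theorem 2.3 of \cite{Cassidy1985Reflective}, where the class of maps $f$ such that $C(f,x)$ is invertible for all $x$ with $x \to 1 \in \m M$ is denoted $\mathring{E}$ and is shown to coincide with $\m E$ exactly when $\m E$ satisfies 2 from 3. What you have written is in effect a self-contained reconstruction of the direction of that cited result which the lemma needs. Your steps all check out: orthogonality of $e \in \m E$ against $x \to 1 \in \m M$ gives bijectivity of $C(e,x)$ (the triangle into $1$ being automatic by terminality); the map $Rf$ exists by the same kind of lifting problem; applying $C(-,x)$ to $Rf \circ \eta_a = \eta_b \circ f$ gives $C(\eta_a,x)\circ C(Rf,x)=C(f,x)\circ C(\eta_b,x)$, so $C(Rf,x)$ is a bijection for local $x$; Yoneda among the local objects makes $Rf$ invertible; and since isomorphisms lie in $\m E$ and $\m E$ is closed under composition (both immediate from the orthogonality characterisation of $\m E$ in \cref{Defi:ofs}), 2-from-3 applied to $\eta_b \circ f = Rf \circ \eta_a$ yields $f \in \m E$. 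One small correction to your accounting: the existence of the terminal object, which this paper builds into the definition of a reflective factorisation system, is used throughout (to define local objects and to form $Ra$, $Rb$), though the 2-from-3 condition itself indeed enters only at the final step. As for what each approach buys: the paper's citation is economical and situates the lemma within the reflection/localisation theory of Cassidy--H\'ebert--Kelly, whereas your argument makes the lemma self-contained, uses nothing beyond elementary consequences of orthogonality, and explicitly constructs the reflection onto the subcategory of $\m M$-local objects that the cited theorem is implicitly about.
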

\begin{proof}
In \cite{Cassidy1985Reflective} $\mathring{E}$ denotes the class of morphisms $f$ for which $ C(f,x):C(b,x) \to  C(a,x)$ is invertible whenever $x \to 1 \in \m M$.  Theorem 2.3 of \emph{ibid.} establishes that $\mathring{E} = E$ whenever $E$ satisfies 2 from 3.
\end{proof}

The $(boba/lff)$-factorisation system on \twocat is reflective: isomorphisms satisfy 2 from 3, so 2-functors which are isomorphisms on underlying categories do as well.

\begin{Defi}
We call a 2-category $C$ \emph{locally contractible} if $C \to 1$ is lff.
\end{Defi}

$C$ being locally contractible is equivalent to saying that, given parallel 1-cells $f,g:x \rightrightarrows y \in C$, there exists a unique 2-cell $f \Rightarrow g$.  Each such 2-cell is necessarily invertible.

\begin{lem}\label{lem:loc_con}
If $C$ is locally contractible then $J_{2}:Ps(B,C) \to [B,C]_{f}$ is invertible.
\end{lem}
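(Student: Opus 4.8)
The plan is to show that $J_{2}$ is an isomorphism of 2-categories. Since $J_{2}$ is the identity on objects — both $Ps(B,C)$ and $[B,C]_{f}$ have the 2-functors $B \to C$ as their objects — it suffices to prove that for each pair $F,G$ of 2-functors the induced functor
\[
(J_{2})_{F,G}:Ps(B,C)(F,G) \to [B,C]_{f}(F,G) = \Pi_{x \in B}C(Fx,Gx)
\]
is an isomorphism of categories. The whole argument rests on the reformulation of local contractibility recorded just above the statement: between any parallel pair of 1-cells $u,v:x \rightrightarrows y$ in $C$ there is exactly one 2-cell $u \Rightarrow v$, and it is invertible. In particular any two parallel 2-cells of $C$ coincide, so that \emph{every} equation between parallel 2-cells of $C$ holds automatically.

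First I would establish that $(J_{2})_{F,G}$ is a bijection on objects. An object of the codomain is a family $\{\alpha_{x}:Fx \to Gx\}_{x \in B}$, while an object of the domain is a pseudonatural transformation (\cref{Defi:pseudo_trans}) whose underlying 1-cell components form such a family. Given a family $\{\alpha_{x}\}$, the remaining datum of a pseudonatural transformation is an invertible 2-cell $\alpha_{f}:\alpha_{b}Ff \Rightarrow Gf\alpha_{a}$ for each 1-cell $f:a \to b$ of $B$. Local contractibility forces each $\alpha_{f}$ to be \emph{the} unique 2-cell between these parallel 1-cells $Fa \to Gb$, and this 2-cell is automatically invertible; moreover the three axioms of \cref{Defi:oplax_trans} are equations between parallel 2-cells of $C$ and so hold automatically. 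Thus every family $\{\alpha_{x}\}$ underlies exactly one pseudonatural transformation, giving the desired bijection on objects.

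Next I would verify that $(J_{2})_{F,G}$ is fully faithful. A morphism of the codomain between families $\{\alpha_{x}\}$ and $\{\beta_{x}\}$ is a family of 2-cells $\{\Gamma_{x}:\alpha_{x} \Rightarrow \beta_{x}\}_{x \in B}$, subject to no conditions, whereas a morphism of the domain is a modification with such components. Since the single modification axiom is once again an equation between parallel 2-cells of $C$, it holds automatically; hence every family of 2-cells is the component family of a unique modification. This exhibits $(J_{2})_{F,G}$ as a bijection on hom-sets, and together with the previous paragraph shows it to be an isomorphism of categories. As this holds for all $F,G$ and $J_{2}$ is bijective on objects, $J_{2}$ is invertible.

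I do not expect a serious obstacle here; the only point requiring genuine care is the bookkeeping observation that every coherence condition in sight — the three oplax transformation axioms and the modification axiom — is really an equation between \emph{parallel} 2-cells of $C$, so that local contractibility applies. Once this is checked, both the existence and the uniqueness of all the required 2-cell data, and hence the invertibility of $J_{2}$, follow immediately from the fact that $C$ has exactly one 2-cell between each parallel pair of 1-cells.
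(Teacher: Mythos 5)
Your proposal is correct and follows essentially the same route as the paper: local contractibility forces a unique (automatically invertible) 2-cell filler $\alpha_f$ for each 1-cell $f$, and every coherence axiom, being an equation between parallel 2-cells of $C$, holds automatically. The only difference is one of completeness — the paper treats only the lifting of transformations to pseudonatural transformations and leaves the modification level implicit, whereas you spell out the full faithfulness on hom-categories (modifications) as well, which is a welcome bit of extra care rather than a departure in method.
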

\begin{proof}
Consider 2-functors $F$ and $G$ and a transformation $\eta:F \to G$ between them in $[B,C]_{f}$.  Then at $f:x \to y \in B$ the square
\[
\cd{Fx \ar[d]_{Ff} \ar[r]^{\eta_{x}} & Gx \ar[d]^{Gf} \\
Fy \ar[r]_{\eta_{y}} & Gy \\
}
\]
need not commute, but since $C$ is locally contractible there exists a unique isomorphism $\eta_{f}:Gf \circ \eta_{X} \cong \eta_{Y}\circ Ff$.  Now because all diagrams of 2-cells in $C$ commute it follows that the equations for a pseudonatural transformation cannot help but hold.
\end{proof}

\begin{prop}\label{prop:pboba}
The map $P:A \star B \to A \otimes B$ is boba.
\end{prop}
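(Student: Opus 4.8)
The plan is to exploit the fact, observed just above, that the $(boba/lff)$-factorisation system on \twocat is reflective, so that membership of $P$ in the left class $\m E$ can be detected by a representable criterion. Applying \cref{lem:reflective_fs} to $P: A \star B \to A \otimes B$, it suffices to show that
\[
\twocat(P, C): \twocat(A \otimes B, C) \to \twocat(A \star B, C)
\]
is invertible for each 2-category $C$ with $C \to 1 \in \m M$, that is, for each \emph{locally contractible} $C$.

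First I would identify this map with the arrow $P^{*}$ appearing in the diagram preceding \eqref{eq:PQ}: by construction $P$ was the map inducing $P^{*}$, so precomposition $\twocat(P,C)$ is exactly $P^{*}$. Under the defining natural isomorphisms $\twocat(A \otimes B, C) \cong \twocat(A, Ps(B,C))$ and $\twocat(A \star B, C) \cong \twocat(A, [B,C]_{f})$, this $P^{*}$ corresponds to $(J_{2})_{*} = \twocat(A, J_{2})$, post-composition with the forgetful 2-functor $J_{2}: Ps(B,C) \to [B,C]_{f}$. Indeed, this is precisely the commutativity of the right-hand square of that diagram, whose vertical comparison maps are isomorphisms.

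Now I would invoke \cref{lem:loc_con}: when $C$ is locally contractible, the 2-functor $J_{2}: Ps(B,C) \to [B,C]_{f}$ is invertible. Consequently $\twocat(A, J_{2})$ is a bijection for every $A$, and hence $\twocat(P,C)$ is invertible. This is exactly the hypothesis required by \cref{lem:reflective_fs}, so $P \in \m E$; that is, $P$ is boba.

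The only point demanding care is the bookkeeping of the middle step — checking that the hom-set map $\twocat(P,C)$ genuinely is the transpose of $\twocat(A, J_{2})$ across the two representing isomorphisms, rather than some other natural map. This is immediate from the way $P$ was extracted by the Yoneda lemma from the square of natural isomorphisms, together with the fact that $\twocat(A,-)$ carries the isomorphism $J_{2}$ to an isomorphism. Thus all the genuine content has been isolated in \cref{lem:loc_con}, and the present argument is purely formal.
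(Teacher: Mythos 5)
Your proof is correct and follows essentially the same route as the paper's: both reduce membership in the left class via \cref{lem:reflective_fs} to the invertibility of $P^{*}$ against locally contractible $C$, transpose this across the representing isomorphisms to the invertibility of $(J_{2})_{*}$, and conclude by \cref{lem:loc_con}. The only difference is that you spell out the Yoneda bookkeeping that the paper compresses into the phrase ``by adjointness.''
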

\begin{proof}
Let $C$ be locally contractible.  By \cref{lem:reflective_fs} to show that $P$ is boba it suffices to show that $P^{\star}:\twocat(A \otimes B,C) \to \twocat(A \star B,C)$ is invertible.  By adjointness this is equally to say that $(J_{2})_{\star}:\twocat(A,Ps(B,C)) \to \twocat(A,[B,C]_{f})$ is invertible, so it will suffice to show that $J_{2}:Ps(B,C) \to [B,C]_{f}$ is invertible, which follows by \cref{lem:loc_con}.
\end{proof}


Our aim now is to show that the 2-functor $Q:A \otimes B \to A \times B$ is locally full and faithful.  We will start by describing a section of it: the universal cubical functor $R:A \times B \rightsquigarrow A \otimes B$.  We then prove that $R$ is an equivalence inverse to $Q$ in the 2-category $\Icon$ and conclude that $Q$ is, in particular, locally full and faithful.

In the following we will consider pseudofunctors between 2-categories.  We will denote a pseudofunctor by a squiggly arrow such as $F:A \rightsquigarrow B$.  Recall that a pseudofunctor need not preserve composition or units on the nose but only up to invertible 2-cells $F(gf)\cong FgFf$ and $F1_{x} \cong 1_{Fx}$ satisfying coherence axioms which are essentially the same as the oplax monoidal functor axioms; these were first described in \cite{Benabou1967Introduction}.

\begin{Defi}\label{defn:cubical_fun}
A \emph{cubical functor} $F:A \times B \rightsquigarrow C$ is a normal (i.e., strictly identity-preserving) pseudofunctor such that for all pairs of composable arrows in $A \times B$
\[
\cd{(a_{1},a_{2}) \ar[r]^{(f_{1},f_{2})} & (b_{1},b_{2}) \ar[r]^{(g_{1},g_{2})} & (c_{1},c_{2})}
\]
with the property that if $f_{1}$ or $g_{2}$ is identity, the comparison 2-cell
\[F((g_{1},g_{2})\circ(f_{1},f_{2})) \cong F(f_{1},f_{2}) \circ F(g_{1},g_{2})\]
 is an identity.
 \end{Defi}

\begin{comment}
This is, as is probably clear, an obscure notion.  It is a translation of Gray's cubical functors \cite{Gray1974Formal} -- not defined as a special kind of pseudofunctor -- into the language of pseudofunctors.  It is justified by the following result (4.7 of \cite{Gordon1995Coherence}).
\end{comment}

\begin{nota}
For 2-categories $A, B, C$, let $Cub(A \times B, C)$ denote the set of cubical functors $A \times B \to C$.
\end{nota}

\begin{prop}
There is a bijection $Cub(A \times B,C) \cong \twocat(A,Ps(B,C))$, natural in $C$, which extends the bijection $\twocat(A \times B,C) \cong \twocat(A,[B,C])$
\end{prop}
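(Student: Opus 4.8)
The plan is to exhibit the bijection by a direct two-sided construction: from a cubical functor $F:A \times B \rightsquigarrow C$ I extract a strict $2$-functor $\bar{F}:A \to Ps(B,C)$, I build an inverse assignment from strict $2$-functors into $Ps(B,C)$ back to cubical functors, and I check that the two are mutually inverse, natural in $C$, and restrict to the classical cartesian adjunction. The organising observation throughout is that every $1$-cell $(h,f):(a,b)\to(a',b')$ of $A\times B$ factors in exactly two ways as an ``$A$-move'' followed by a ``$B$-move'', namely as $(h,1_{b'})\circ(1_a,f)$ and as $(1_{a'},f)\circ(h,1_b)$, and that the cubical condition of \cref{defn:cubical_fun} forces precisely one of these to have identity comparison $2$-cell.

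Given a cubical functor $F$, each assignment $b \mapsto F(a,b)$ underlies a strict $2$-functor $F(a,-):B \to C$: it preserves identities by normality, and it preserves composition strictly because a composite $(1_a,g)\circ(1_a,f)$ has identity comparison (its first factor has identity $A$-component, so the cubical condition applies). I send a $1$-cell $h:a\to a'$ of $A$ to the pseudonatural transformation $\bar{F}(h)$ whose component at $b$ is $F(h,1_b):F(a,b)\to F(a',b)$ and whose naturality $2$-cell at $f:b\to b'$ is the invertible comparison $2$-cell of $F$ arising from the two factorisations of $F(h,f)$: the factorisation through $(h,1_{b'})\circ(1_a,f)$ is strict by the cubical condition, while that through $(1_{a'},f)\circ(h,1_b)$ supplies the genuine comparison. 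A $2$-cell $\mu:h\Rightarrow h'$ of $A$ is sent to the modification with components $F(\mu,1_b)$. The pseudonaturality axioms, the modification axiom, and the strict $2$-functoriality of $\bar{F}$ into $Ps(B,C)$ all reduce to the pseudofunctor coherence axioms for $F$ together with the cubical condition; for instance, strict preservation of composition of $A$-cells uses that $(g,1_b)\circ(h,1_b)$ has identity comparison, its second factor having identity $B$-component.

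Conversely, given a strict $2$-functor $G:A\to Ps(B,C)$, I define $\check{G}(a,b)=G(a)(b)$ and, on $(h,f)$, the composite $G(a')(f)\circ G(h)_b$; the comparison $2$-cells for a general composite are assembled from the pseudonaturality $2$-cells of the transformations $G(h)$ and the strict functoriality of $G$ and of each $G(a)$. Normality is immediate, and the cubical condition holds because whenever the $A$-component of the first factor or the $B$-component of the second is an identity the relevant pseudonaturality datum degenerates. That $\check{G}$ is a pseudofunctor is exactly its two coherence axioms, which follow from the pseudonaturality axioms of the $G(h)$ and the $2$-functoriality of $G$. Unwinding definitions on objects, on the two families of axis $1$-cells $(h,1_b)$ and $(1_a,f)$ (which generate all of $A\times B$), and on $2$-cells shows $F\mapsto\bar F\mapsto\check{\bar F}$ and $G\mapsto\check G\mapsto\overline{\check G}$ to be identities, since in each case the mixed comparison $2$-cell both recovers and is recovered from the pseudonaturality datum. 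Naturality in $C$ is clear, as postcomposing a cubical functor with a $2$-functor $C\to C'$ is again cubical and commutes with the formation of components and comparison $2$-cells. Finally, a strict $2$-functor $A\times B\to C$ is precisely a cubical functor all of whose comparisons are identities, so under $F\mapsto\bar F$ the naturality $2$-cells become identities and $\bar F$ lands in the subcategory $[B,C]\hookrightarrow Ps(B,C)$; this is exactly the assertion that the bijection extends $\twocat(A\times B,C)\cong\twocat(A,[B,C])$.

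I expect the main obstacle to be the coherence bookkeeping in both directions, namely verifying that the comparison $2$-cells assembled from the two-sided factorisation of each $(h,f)$ satisfy the pseudofunctor associativity and unit axioms, and dually that the pseudonaturality axioms hold. The single conceptual point that tames all of these checks is the canonical two-fold factorisation of a $1$-cell into an $A$-move and a $B$-move, exactly one of which the cubical condition renders strict; this is what makes the mixed $2$-cells well defined and is invoked repeatedly. (The result is essentially a reformulation of the classical cubical-functor correspondence of \cite{Gray1974Formal, Gordon1995Coherence}.)
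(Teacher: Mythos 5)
Your overall strategy---the direct two-sided construction organised around the two factorisations of $(h,f)$ into an $A$-move and a $B$-move---is sound, and it is essentially the content that the paper compresses into ``the remainder is a routine calculation'' after exhibiting the evaluation cubical functor $ev:Ps(B,C)\times B\rightsquigarrow C$. Your forward direction $F\mapsto\bar F$ is correct. However, your reverse construction has the wrong handedness, and this is a genuine error rather than a harmless choice of convention. \Cref{defn:cubical_fun} demands that the comparison for a composite $(g_1,g_2)\circ(f_1,f_2)$ be an identity when $f_1$ (the \emph{first} factor's $A$-component) or $g_2$ (the \emph{second} factor's $B$-component) is an identity; as you yourself note, this makes $(h,f)=(h,1_{b'})\circ(1_a,f)$ the strict factorisation, so every cubical $F$ satisfies $F(h,f)=F(h,1_{b'})\circ F(1_a,f)$ on the nose (``$B$-move first, then $A$-move''). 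Your inverse is defined by the opposite composite, $\check G(h,f)=G(a')(f)\circ G(h)_b$. Two things then break. First, $\check G$ is not cubical in the sense of \cref{defn:cubical_fun}: taking the composite $(h,1_{b'})\circ(1_a,f)$, cubicality would force the \emph{equality of 1-cells} $\check G(h,f)=\check G(h,1_{b'})\circ\check G(1_a,f)=G(h)_{b'}\circ G(a)(f)$, whereas your definition gives the generally different 1-cell $G(a')(f)\circ G(h)_b$ (the two are merely isomorphic via the pseudonaturality cell $G(h)_f$); more generally your comparison cells degenerate when the \emph{second} factor's $A$-component or the \emph{first} factor's $B$-component is an identity, the mirror image of the required condition. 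Second, and for the same reason, the round trip fails: $\check{\bar F}(h,f)=F(1_{a'},f)\circ F(h,1_b)$, which differs from $F(h,f)=F(h,1_{b'})\circ F(1_a,f)$ by a non-identity comparison 2-cell of $F$, so $F\mapsto\bar F\mapsto\check{\bar F}$ is not the identity.

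The repair is local: set $\check G(h,f)=G(h)_{b'}\circ G(a)(f)$, matching the strict factorisation used in your forward direction. Then the comparison for $(k,g)\circ(h,f)$ is built from $G(h)_g$, which is an identity exactly when $h$ or $g$ is (as \cref{defn:cubical_fun} requires), and the round trip gives $\check{\bar F}(h,f)=F(h,1_{b'})\circ F(1_a,f)=F(h,f)$ on the nose. This corrected inverse is precisely composition with the paper's universal cubical functor, $ev(\eta,\alpha)=\eta_b\circ F\alpha$, which is how the paper organises the proof: exhibit $ev$, define the map $G\mapsto ev\circ(G\times 1)$, and verify universality. With the handedness fixed, the rest of your argument (coherence checks, naturality in $C$, and the restriction to the cartesian bijection $\twocat(A\times B,C)\cong\twocat(A,[B,C])$) goes through as you describe.
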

\begin{proof}[Sketch]
The point is that evaluation gives a \emph{universal cubical functor}
\[
ev:Ps(B,C) \times B \rightsquigarrow C
\]
sending
\begin{itemize}
\item $(F,a)$ to $Fa$;
\item $(\eta,\alpha):(F,a) \to (G,b)$ to the composite $\eta_{b} \circ F\alpha:Fa \to Fb \to Gb$; and
\item at a composable pair
\[
\cd{(F,a) \ar[r]^{(\eta,\alpha)} & (G,b) \ar[r]^{(\mu,\beta)} & (H,c)}
\]
the comparison 2-cell is given by the isomorphism
\[
\xy
(-20,-00)*+{Fa}="-10";
(0,0)*+{Fb}="00"; (20,10)*+{Fc}="21";  (20,-10)*+{Gb}="2-1";
(40,0)*+{Gc}="30";(60,0)*+{Hc}="40";
{\ar^{F\alpha} "-10";"00"};
{\ar^{F\beta}"00";"21"};
{\ar^{\eta_{c}}"21";"30"};
{\ar@{=>}^{\eta_{\beta}}(20,4)*+{};(20,-4)*+{}};
{\ar^{\mu_{c}}"30";"40"};
{\ar_{\eta_{b}}"00";"2-1"};
{\ar_{G\beta}"2-1";"30"};
\endxy
\]
which is an identity whenever $\eta=1$ or $\beta=1$.
\end{itemize}  The remainder is a routine calculation.
\end{proof}

In particular we obtain a composite isomorphism
\[
\twocat(A \otimes B,C) \cong \twocat(A,Ps(B,C)) \cong Cub(A \times B,C)
\]
natural in $C$.  The Yoneda lemma therefore yields a \emph{universal cubical functor}
\[
R:A \times B \rightsquigarrow A \otimes B,
\]
precomposition with which induces a bijection (natural in $C$) between 2-functors $A \otimes B \to C$ and cubical functors $A \times B \rightsquigarrow C$.  In particular the identity 2-functor $A \times B \to A \times B$ factors uniquely through $R$ as a 2-functor $A \otimes B \to A \times B$: our $Q$ from before, as we now show.

\begin{prop}
The 2-functor $Q:A \otimes B \to A \times B$ of \cref{eq:PQ} satisfies $QR=1$.
\end{prop}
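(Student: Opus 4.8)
The plan is to identify $Q$ with the identity cubical functor under the bijection between $2$-functors $A \otimes B \to A \times B$ and cubical functors $A \times B \rightsquigarrow A \times B$. Since that bijection is precisely precomposition with the universal cubical functor $R$, the equality $QR = 1$ will drop out. Two ingredients have to be assembled: the description of $Q$ in terms of the inclusion $J_{1}:[B,C] \to Ps(B,C)$ coming from its defining square, and the compatibility of the cubical bijection with the cartesian closed bijection $\twocat(A \times B,C) \cong \twocat(A,[B,C])$. Both are already available.

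First I would name the cartesian transpose $\iota \in \twocat(A,[B,A \times B])$ of the identity $1_{A \times B}$; explicitly $\iota$ sends an object $a$ to the $2$-functor $b \mapsto (a,b)$. I then evaluate the left-hand commuting square of the large diagram immediately preceding \eqref{eq:PQ} at $C = A \times B$ and at $\iota$. The lower composite sends $\iota$ first to $1_{A \times B}$ and then, via $Q^{*}$, to $1_{A \times B} \circ Q = Q$; the upper composite sends $\iota$ to $(J_{1})_{*}\iota$ and then across the representing isomorphism $\twocat(A,Ps(B,A \times B)) \cong \twocat(A \otimes B, A \times B)$. Commutativity therefore shows that $Q$ corresponds to $(J_{1})_{*}\iota$ under that isomorphism. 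This step is purely formal, a consequence of how $Q$ was defined, and involves no manipulation of cells.

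Next I would transport this identification across the cubical bijection. The composite $\twocat(A \otimes B, C) \cong \twocat(A,Ps(B,C)) \cong Cub(A \times B, C)$ is exactly precomposition with $R$, so $QR$ is the cubical functor assigned to $(J_{1})_{*}\iota$. On the other hand, because the cubical bijection \emph{extends} the cartesian closed bijection, the strict $2$-functor $1_{A \times B}$, regarded as a cubical functor, is assigned to the image under $(J_{1})_{*}$ of its cartesian transpose, namely again to $(J_{1})_{*}\iota$. As $QR$ and $1_{A \times B}$ thus correspond to one and the same element $(J_{1})_{*}\iota$, injectivity of the cubical bijection forces $QR = 1_{A \times B}$.

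The argument is almost entirely bookkeeping with the three isomorphisms in play; the only point demanding care is stating the ``extends'' compatibility precisely, so that $1_{A \times B}$, viewed as a cubical functor, really is the image of $(J_{1})_{*}\iota$. As a sanity check one can instead unwind the universal cubical functor $ev$ directly: on objects $QR(a,b) = \iota(a)(b) = (a,b)$, and on a morphism $(f_{1},f_{2})$ the evaluation formula yields $(f_{1},1) \circ (1,f_{2}) = (f_{1},f_{2})$, with every comparison $2$-cell an identity because $\iota$ lands among $2$-natural transformations. I expect this abstract matching of the two bijections to be the only genuine obstacle, and it is one of careful transcription rather than of real mathematical difficulty.
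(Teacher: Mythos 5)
Your proof is correct and is essentially the paper's own argument: the paper chases the identity $1_{A \times B}$ through the stacked diagram whose top square is the defining square for $Q$ and whose bottom square is the ``extends'' compatibility of the cubical bijection, which is exactly your two-step chase of the transpose $\iota$ followed by transport along $R^{*}$. The compatibility you flag as the only delicate point is already recorded in the paper as the proposition stating that $Cub(A \times B,C) \cong \twocat(A,Ps(B,C))$ extends the bijection $\twocat(A \times B,C) \cong \twocat(A,[B,C])$, so nothing further is needed.
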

\begin{proof}
By definition $R^{*}$ fits into the commutative diagram
\[
\cd{\twocat(A \times B,C) \, \ar[d]_{\cong} \ar[r]^{Q^{*}} & \twocat(A \otimes B,C) \ar[d]^{\cong} \\
\twocat(A,[B,C]) \, \ar[d]_{\cong} \ar@{->}[r]^{(J_{1})_{*}} & \twocat(A,Ps(B,C)) \ar[d]^{\cong}  \\
\twocat(A \times B,C) \, \ar@{->}[r] & Cub(A \times B,C)}
\]
as the composite of the right vertical leg; here the vertical arrows are the natural isomorphisms from various adjunctions and the bottom horizontal arrow is the inclusion.
Setting $C=A \times B$ and chasing the identity $1:A \times B \to A \times B$ from top left to bottom right gives the result.
\end{proof}

\begin{Defi}
The collection of 2-categories, pseudofunctors and icons (\cref{Defi:icon}) forms a strict 2-category which we will call \Icon, and we write $\Icon_{s}$ for the sub 2-category containing the 2-functors.
\end{Defi}

Since \Icon is a 2-category one may speak of an equivalence therein: an arrow $F:A \rightsquigarrow B \in \Icon$ such that there exists $G:B \rightsquigarrow A$ and 2-cells $FG \cong 1$ and $GF \cong 1$.  Let us record the following straightforward characterisation of the equivalences in $\Icon$.  We note that a similar though more involved result for lax functors is established in Proposition 4.3 of \cite{Lack2010Icons}.

\begin{prop}\label{prop:equiv_ff}
A 2-functor $F:A \to B$ is an equivalence in $Icon$ just when it is bijective on objects (bo) and each $A(x,y) \to B(Fx,Fy)$ is an equivalence of categories.  In particular $F$ is then locally full and faithful.
\end{prop}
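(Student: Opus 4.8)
The plan is to prove both implications directly, using throughout the basic fact that an icon between pseudofunctors which agree on objects is invertible in the hom-$2$-category of $\Icon$ precisely when all of its $2$-cell components are invertible, and that the mere existence of such an icon forces its source and target to agree on objects.

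For the forward direction, suppose $F$ is an equivalence in $\Icon$, witnessed by a pseudofunctor $G:B \rightsquigarrow A$ and invertible icons $\eta:1_A \Rightarrow GF$ and $\varepsilon:FG \Rightarrow 1_B$. Since an icon has identity $1$-cell components, its existence forces agreement on objects, so $GFx=x$ and $FGu=u$ for all $x \in A$, $u \in B$; thus $F$ is bijective on objects with object-inverse $G$. For the local structure, fix $x,y \in A$ and write $u=Fx$, $v=Fy$. As $F$ is a $2$-functor and $G$ a pseudofunctor, the hom-component of $GF$ is the composite $G_{u,v}\circ F_{x,y}:A(x,y) \to A(x,y)$, and the $2$-cell axiom for the icon $\eta$ (\cref{Defi:icon}) says exactly that the invertible $2$-cells $\eta_g:g \Rightarrow GF(g)$ are natural in $g \in A(x,y)$. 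Hence $\eta$ restricts to a natural isomorphism $\id_{A(x,y)} \cong G_{u,v}F_{x,y}$, and symmetrically $\varepsilon$ restricts to a natural isomorphism $F_{x,y}G_{u,v} \cong \id_{B(u,v)}$. Therefore $F_{x,y}$ is an equivalence of categories.

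For the converse, suppose $F$ is bijective on objects and each $F_{x,y}$ is an equivalence of categories. Define $G$ on objects by the inverse bijection, $Gu=x$ where $Fx=u$, so that $FG=1$ and $GF=1$ on objects strictly. For each pair choose an \emph{adjoint} equivalence inverse $G_{u,v}:B(u,v) \to A(x,y)$ to $F_{x,y}$, with invertible unit $\eta:\id \Rightarrow G_{u,v}F_{x,y}$ and counit $\varepsilon:F_{x,y}G_{u,v} \Rightarrow \id$ satisfying the triangle identities. It remains to assemble the $G_{u,v}$ into a pseudofunctor and to promote $\eta,\varepsilon$ to icons. The key leverage is that $F$ is a \emph{strict} $2$-functor which is moreover \emph{fully faithful} on each hom-category: this lets us \emph{define} the unit and composition constraints of $G$ as the unique $2$-cells whose images under $F$ are the canonical comparisons built from $\varepsilon$ (for instance, the composition constraint $G(w'w) \Rightarrow G(w')G(w)$ is the unique $2$-cell sent by $F$ to $\varepsilon_{w'w}$ followed by the inverse of $\varepsilon_{w'} * \varepsilon_{w}$, using that $F(G(w')G(w))=FG(w') \cdot FG(w)$ by strictness). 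These constraints are invertible, and each pseudofunctor coherence axiom for $G$ holds because, after applying the faithful functor $F_{x,y}$, it reduces to a genuine equation following from the strictness of $F$ together with the naturality and triangle identities of $(\eta,\varepsilon)$. By the very definition of these constraints, the icon axioms for $\varepsilon:FG \Rightarrow 1_B$ and $\eta:1_A \Rightarrow GF$ are then satisfied, and since all components are invertible these are invertible icons; this exhibits $F$ as an equivalence in $\Icon$.

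The final assertion is immediate: an equivalence of categories is in particular full and faithful, so $F$ is locally full and faithful. The only real work lies in the converse, specifically in the transport-of-structure step verifying the pseudofunctor coherence of $G$ and the icon axioms, and this is exactly where I expect the main obstacle. It is, however, rendered routine by the fact that $F$ is strict and bijective on objects, so that no choices need to be adjusted on objects and faithfulness of $F$ on homs turns every coherence check into a calculation in $B$. This mirrors, in a simpler strict setting, the lax-functor statement of Proposition 4.3 of \cite{Lack2010Icons}.
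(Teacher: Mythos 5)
Your proof is correct, but there is essentially nothing in the paper to compare it against: the proposition is recorded without proof, introduced as a ``straightforward characterisation'' with a pointer to Proposition 4.3 of \cite{Lack2010Icons} for a similar but more involved result for lax functors. Your argument supplies the omitted details and is sound. The forward direction is the expected one: the existence of icons forces $GF$ and $FG$ to be identities on objects, and an invertible icon restricts on each hom-category to a natural isomorphism, exhibiting each $F_{x,y}$ as an equivalence. In the converse, your two key moves are exactly right: choosing \emph{adjoint} equivalence inverses hom-wise, and using strictness of $F$ together with local full faithfulness to define the constraint cells of $G$ as unique $F$-preimages of composites built from $\varepsilon$. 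Two refinements, not corrections. First, the pseudofunctor coherence axioms for $G$ need neither naturality nor the triangle identities: since both constraints of $G$ are $F$-preimages of composites of components of $\varepsilon$ and their inverses, applying the faithful $F$ collapses every coherence axiom by middle-four interchange and cancellation alone. Second, the triangle identities are genuinely needed, but only for the icon axioms of $\eta:1_A \Rightarrow GF$: they give $F(\eta_f)=(\varepsilon_{Ff})^{-1}$, from which the unit and composition axioms for $\eta$ follow after applying the faithful $F$; with a non-adjoint choice of $(\eta,\varepsilon)$ the unit axiom for $\eta$ could fail, so your insistence on adjoint equivalences is doing real work and deserves to be flagged as such.
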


Because $\Icon$ is a 2-category we can consider 2-categorical limits in it.  The crucial limit is the pseudolimit of an arrow.  We will use these to establish that the universal cubical functor $Q:A \times B \rightsquigarrow A \otimes B$ is equivalence inverse to $P:A \otimes B \to A \times B$, employing a similar trick in \cref{prop:qlff} to that in Theorem 4.2 of \cite{Blackwell1989Two-dimensional}.

\begin{Defi}\label{Defi:pslim}
Let $f: a \to b$ be a 1-cell in a 2-category $K$.  The \emph{pseudolimit} of $f$ is an object $l$ together with a pair of 1-cells $u,v$ and a 2-cell $\lambda$
\[
\xy
(0,0)*+{l}="1";
(-12,-12)*+{a}="2";
(12,-12)*+{b}="3";
(0,-8)*{\cong \lambda};
{\ar_{u} "1"; "2"};
{\ar^{v} "1"; "3"};
{\ar_{f} "2"; "3"};
\endxy
\]
satisfying the following universal property.
\begin{enumerate}
\item Given any other object $t$ together with 1-cells $p,q$ and 2-cell $\tau$ as below on the left,
\[
\xy
(0,0)*+{
\xy
(0,0)*+{t}="1";
(-12,-20)*+{a}="2";
(12,-20)*+{b}="3";
(0,-13.2)*{\cong \tau};
{\ar_{p} "1"; "2"};
{\ar^{q} "1"; "3"};
{\ar_{f} "2"; "3"};
\endxy};
(20,0)*{=};
(40,0)*+{
\xy
(0,0)*+{t}="0";
(0,-10)*+{l}="1";
(-12,-20)*+{a}="2";
(12,-20)*+{b}="3";
(0,-16.6)*{\cong \lambda};
{\ar_{u} "1"; "2"};
{\ar^{v} "1"; "3"};
{\ar_{f} "2"; "3"};
{\ar^{k} "0"; "1"};
\endxy};
\endxy
\]
there exists a unique 1-cell $k:t \to l$ such that $p = uk, q = vk, \tau  = \lambda * k$ as above on the right.
\item Given two such $(t,p,q,\tau), \, (t,p',q',\tau')$ sharing the same source object $t$ and with corresponding 1-cells $k, k'$, and 2-cells $\rho: p \Rightarrow p', \sigma: q \Rightarrow q'$ such that
\[
(f * \rho) \circ \tau = \tau' \circ \sigma,
\]
there exists a unique 2-cell $\theta: k \Rightarrow k'$ such that $\rho = u * \theta, \sigma = v * \theta$.
\end{enumerate}
\end{Defi}
Given a pseudofunctor $F:A \rightsquigarrow B$ its pseudolimit in \Icon
\[
\xy
(0,0)*+{A}="00"; (15,15)*+{C}="11"; (30,0)*+{B}="20";
{\ar_{S} "11";"00"};{\ar^{T} "11";"20"};{\ar@{~>}_{F}"00";"20"};
{\ar@{=>}^{\lambda}(15,10)*+{};(15,3)*+{}};
\endxy
\hspace{2cm}
\xy
(0,0)*+{A(x,y)}="00"; (15,15)*+{C(x,y)}="11"; (30,0)*+{B(Fx,Fy)}="20";
{\ar_{S_{x,y}} "11";"00"};{\ar^{T_{x,y}} "11";"20"};{\ar@{~>}_{F_{x,y}}"00";"20"};
{\ar@{=>}^{\lambda_{x,y}}(15,10)*+{};(15,3)*+{}};
\endxy
\]
is described as follows\begin{footnote}{We note that the existence of such pseudolimits in \Icon can alternatively be established using results of \cite{Blackwell1989Two-dimensional} on 2-monads, upon observing that \Icon is the 2-category of algebras and pseudomorphisms for the free 2-category monad on the 2-category of \Cat-enriched graphs.}\end{footnote}.  $C$ has the same objects as $A$; each hom-category $C(x,y)$ is constructed as the pseudolimit of the functor $F_{x,y}:A(x,y) \to B(Fx,Fy)$ in \Cat  as depicted in the diagram above right.  So $S$ is the identity on objects and $T$ and $F$ agree on objects.

More concretely, an arrow $x \to y$ of $C$ is given by a triple $(f,\theta,g)$ with
\begin{itemize}
\item $f:x \to y \in A$ and
\item an invertible 2-cell $\theta:g \cong Ff$.
\end{itemize}
  A 2-cell $(f_{1},g_{1},\theta_{1}) \Rightarrow (f_{2},g_{2},\theta_{2})$ consists of a pair of 2-cells $\beta:f_{1} \Rightarrow f_{2}$ and $\alpha:g_{1} \Rightarrow g_{2}$ making the square
\[
\cd{g_{1} \ar@{=>}[d]_{\alpha} \ar@{=>}[r]^{\theta_{1}} &  Ff_{1} \ar@{=>}[d]^{F\beta} \\
 g_{2} \ar@{=>}[r]_{\theta_{2}} &  Ff_{2}\\
 }
\]
commute.  Given $(f_{1},\theta_{1},g_{1}):x \to y$ and $(f_{2},\theta_{2},g_{2}):y \to z$ the composite is given by $(f_{2}f_{1},\phi,g_{2}g_{1})$ where $\phi$ is the composite
\[
g_{2}g_{1} \cong Ff_{2}Ff_{1} \cong F(f_{2}f_{1}).
\]
  Composition of 2-cells is clear.  Then the 2-functors $S$ and $T$ are the evident projections to $A$ and $B$ whilst the icon $\lambda$ has component at $(f,\theta,g):x \to y$ given by $\theta:g \cong Ff$ itself.  That $C$ has the claimed universal property is easily verified.  An obvious yet crucial fact is that a 2-cell of $C$ is an identity just when its images under $S$ and $T$ are identities.  We summarise below.

\begin{prop}
The pseudolimit $(C,S,\lambda,T)$ of an arrow in $\Icon$ exists and has projections $S$ and $T$ given by 2-functors.  The projections $S$ and $T$ jointly reflect identity 2-cells.
\end{prop}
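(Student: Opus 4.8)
The plan is to treat the explicit description of $(C,S,\lambda,T)$ recorded in the paragraphs above as the definition, and simply to verify the three assertions; since every step is bookkeeping with the coherence data of the pseudofunctor $F:A \rightsquigarrow B$, the work is entirely routine. First I would confirm that $C$ is a $2$-category. Taking the identity on $x$ to be $(1_x,\iota_x,1_{Fx})$, where $\iota_x:1_{Fx}\cong F1_x$ is the unit comparison of $F$, unitality of composition follows from the unit coherence axioms for $F$; associativity of the composite triples $(f_2f_1,\phi,g_2g_1)$ is strict on the outer $A$- and $B$-coordinates, while on the invertible middle coordinate $\phi$ it reduces to the associativity coherence axiom for the comparisons $F(gf)\cong FgFf$. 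Vertical and horizontal composition of the $2$-cells $(\beta,\alpha)$ is componentwise and inherits the $2$-category axioms directly from those of $A$ and $B$.

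Next, $S$ and $T$ are $2$-functors rather than merely pseudofunctors. Indeed, by construction the $A$- and $B$-coordinates of the composite $(f_2f_1,\phi,g_2g_1)$ are the \emph{strict} composites $f_2f_1$ and $g_2g_1$, and the coordinates of the identity $(1_x,\iota_x,1_{Fx})$ are $1_x$ and $1_{Fx}$, so each projection preserves composition and units on the nose. The entire discrepancy arising from $F$'s failure to be strict is absorbed into the invertible middle coordinate $\phi$, which records the icon $\lambda$ and is invisible to $S$ and $T$.

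For the universal property I would check the two clauses of \cref{Defi:pslim} in turn. For clause (1), given a cone $(t,p,q,\tau)$ in $\Icon$ --- so pseudofunctors $p:t\rightsquigarrow A$, $q:t\rightsquigarrow B$ and an icon $\tau$ with components $\tau_h:q(h)\cong Fp(h)$ --- I define $k:t\rightsquigarrow C$ by $kz=pz$ on objects and $k(h)=(p(h),\tau_h,q(h))$ on a $1$-cell $h$; its pseudofunctoriality constraints are assembled from those of $p$ and $q$, the resulting pair forming a legitimate $2$-cell of $C$ precisely because $\tau$ satisfies the icon coherence axioms. The equations $Sk=p$, $Tk=q$ and $\lambda*k=\tau$ then hold by inspection, and any competitor $k'$ satisfying them must agree with $k$ on objects and send each $h$ to a triple whose outer coordinates are $p(h),q(h)$ and whose middle coordinate is forced by $\lambda*k'=\tau$ to be $\tau_h$, giving uniqueness. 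For clause (2), a $2$-cell $\theta:k\Rightarrow k'$ in $\Icon$ is an icon, hence amounts to a $2$-cell $k(h)\Rightarrow k'(h)$ of $C$ at each $h$, i.e.\ a pair $(\rho_h,\sigma_h)$ satisfying the defining square of a $2$-cell of $C$; the constraints $\rho=S*\theta$, $\sigma=T*\theta$ force this pair to be $(\rho_h,\sigma_h)$, and the hypothesis $(F*\rho)\circ\tau=\tau'\circ\sigma$ is exactly the componentwise assertion that those squares commute, so $\theta$ exists and is unique.

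Finally, the joint reflection of identity $2$-cells is immediate: a $2$-cell of $C$ is the pair $(\beta,\alpha)=(S(\beta,\alpha),T(\beta,\alpha))$, which is an identity exactly when both coordinates are, as was already observed in the construction of $C$. The only point demanding genuine care is the orientation of the invertible $2$-cells and the verification that the constraint pairs built for $k$ and for $\theta$ really do satisfy the defining square of a $2$-cell of $C$; I expect this matching --- and not any conceptual difficulty --- to be the main obstacle, and it is resolved simply by unwinding the icon axioms for $\tau,\tau'$ together with the pseudofunctor coherence for $F$, $p$, and $q$.
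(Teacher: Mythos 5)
Your proposal is correct and takes essentially the same approach as the paper: the paper's own argument for this proposition is precisely the explicit construction of $C$, $S$, $T$ and $\lambda$ given in the preceding paragraphs, with the universal property declared ``easily verified,'' and you have simply carried out those routine verifications (strictness of the projections, the two clauses of \cref{Defi:pslim}, and joint reflection of identities) in the same terms. The only detail worth adding is that uniqueness of the induced $k$ also requires its pseudofunctoriality constraints to be forced, which follows at once since $S$ and $T$ are jointly injective on $2$-cells of $C$.
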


\begin{prop}\label{prop:qlff}
The comparison $Q:A \otimes B \to A \times B$ is an equivalence in $\Icon$.  In particular it is locally full and faithful.
\end{prop}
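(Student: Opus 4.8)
The plan is to produce an explicit pseudo-inverse to $Q$ in $\Icon$ and invoke \cref{prop:equiv_ff}. First I would dispose of the easy bookkeeping: since $K=QP$ is the identity on objects and $P$ is bijective on objects by \cref{prop:pboba}, the $2$-functor $Q$ is bijective on objects. By \cref{prop:equiv_ff} it therefore suffices to exhibit $Q$ as an equivalence in $\Icon$, and for that I would show that the universal cubical functor $R$ is an equivalence-inverse: we already have $QR=1$, so the whole task reduces to constructing an invertible icon $RQ\cong 1_{A\otimes B}$.

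To build this icon coherently I would follow the pseudolimit trick of Theorem~4.2 of \cite{Blackwell1989Two-dimensional}. Form the pseudolimit $(L,S,\lambda,T)$ of $Q$ in $\Icon$, so that $S\colon L\to A\otimes B$ and $T\colon L\to A\times B$ are $2$-functors, $\lambda\colon QS\cong T$, and $S,T$ jointly reflect identity $2$-cells. Reading off the explicit description, $S$ is bijective on objects and each $S_{x,y}$ is full, faithful and essentially surjective, so by \cref{prop:equiv_ff} the projection $S$ is itself an equivalence in $\Icon$. Using $QR=1$ the cone $(R,1_{A\times B},\mathrm{id})$ satisfies the hypotheses of the universal property of $L$ and so induces a $1$-cell $\Psi\colon A\times B\rightsquigarrow L$ with $S\Psi=R$ and $T\Psi=1_{A\times B}$; thus $\Psi$ is a section of $T$. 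Because $\lambda$ makes $T\cong QS$ and $S$ is an equivalence, $Q$ is an equivalence exactly when $T$ is, and it remains to upgrade the section $\Psi$ to an equivalence-inverse by producing an invertible icon $\Psi T\cong 1_{L}$.

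This last step is the crux, and I expect it to be the main obstacle. Comparing $\Psi T$ with $1_L$ through the projections, the two agree after $T$, while after $S$ they give $RT$ and $S$ respectively; so what is really needed is an invertible icon $\rho\colon S\cong RT$ lying over $\lambda$, that is, for each arrow $(f,\theta,g)$ of $L$ a coherent isomorphism $f\cong Rg$ refining $\theta\colon g\cong Qf$. The content here is precisely the local full-faithfulness of $Q$, so it cannot be extracted from the pseudolimit alone. I would construct $\rho$ from the coherence data of the pseudofunctor $R$: since $P$ is boba, every $1$-cell $f$ of $A\otimes B$ is a composite of $R$-images of $1$-cells of $A\times B$, and the structure isomorphisms of $R$ then assemble the required comparison $f\cong R(Qf)$. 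The delicate part is checking the three icon axioms for $\rho$, and here the joint reflection of identity $2$-cells by $S$ and $T$ is exactly the device that reduces these verifications to equalities of $1$-cells, which hold by functoriality. Once $\rho$ is in hand, part~(2) of the universal property of the pseudolimit (\cref{Defi:pslim}) yields the invertible icon $\Psi T\cong 1_L$ with identity $T$-component; hence $T$, and therefore $Q$, is an equivalence in $\Icon$, and the local full-faithfulness follows from \cref{prop:equiv_ff}.
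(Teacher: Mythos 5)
Your reduction of the problem to producing an invertible icon $RQ \cong 1_{A\otimes B}$ is correct, and so is the idea of using a pseudolimit in \Icon; but you form the pseudolimit of the wrong map, and that is what creates the ``crux'' which then defeats you. With the pseudolimit $(L,S,\lambda,T)$ of the $2$-functor $Q$, the constraint has the shape $\lambda\colon QS \cong T$, so every icon you can manufacture from it by whiskering has $Q$ on the outside left: you can recover $QR \cong 1$ (already known as an equality), but you can never produce $RQ \cong 1$ this way --- as you yourself observe. The failure is in your fallback construction of $\rho\colon S \cong RT$. First, the appeal to joint reflection of identity $2$-cells by $S$ and $T$ is inapplicable there: the icon axioms for $\rho$ are equations between $2$-cells of $A \otimes B$ (the common target of $S$ and $RT$), whereas joint reflection is a statement about $2$-cells of $L$ and can prove nothing about equations in $A\otimes B$. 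Second, decomposing $1$-cells of $A \otimes B$ into composites of $R$-images (via $P$ being boba) controls only objects and $1$-cells; the naturality axiom for $\rho$ must be verified against \emph{every} $2$-cell of $A \otimes B$, and these are not controlled by $P$, which is not locally full (the interchange constraints of $R$ are $2$-cells not in its image). Making that verification honest requires a presentation of the $2$-cells of $A \otimes B$ --- precisely the generators-and-relations computation this paper is structured to avoid.

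The paper's proof instead forms the pseudolimit $(C,S,\lambda,T)$ of the pseudofunctor $R$, so that the constraint has the shape $\lambda\colon RS \cong T$. The cone $(1_{A\times B}, R, \mathrm{id})$ induces $U\colon A \times B \rightsquigarrow C$ with $SU = 1$ and $TU = R$; joint reflection of identity $2$-cells is used here, and correctly, to show that $U$ is \emph{cubical} (its comparison cells have identity images under both $S$ and $T$ in the relevant cases, since $1$ is strict and $R$ is cubical). The defining universal property of the Gray tensor product --- $2$-functors out of $A \otimes B$ correspond bijectively to cubical functors out of $A \times B$ --- then yields a strict $2$-functor $L\colon A \otimes B \to C$ with $LR = U$, and uniqueness of such factorisations forces $SL = Q$ and $TL = 1$; the desired invertible icon is simply the whiskering $\lambda L\colon RQ \cong 1$. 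The universal property thus does all the coherence work that your $\rho$ would have to do by hand. If you rerun your argument with the pseudolimit of $R$ in place of that of $Q$, your $\Psi$ becomes this $U$, the problematic step disappears, and the proof closes.
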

\begin{proof}
Consider the universal cubical functor $R:A \times B \rightsquigarrow A \otimes B$.  We form its pseudolimit $(C, \lambda)$ in $\Icon$ as in the interior triangles below.
\[
\xy
(-20,-00)*+{A \times B}="-10";
(0,0)*+{C}="00"; (20,10)*+{A \times B}="10";
(20,-10)*+{A \otimes B}="11";
{\ar^{S} "00";"10"};
{\ar@{~>}^{R}"10";"11"};
{\ar_{T}"00";"11"};
{\ar@{=>}^{\lambda}(17,4)*+{};(12,-5)*+{}};
{\ar@{~>}^{U}"-10";"00"};
{\ar@/^1pc/^{1}"-10";"10"};
{\ar@/_1pc/@{~>}_{R}"-10";"11"};
\endxy
\hspace{0.2cm}
\xy
(-38,0)*+{A \times B}="-20";
(-20,-00)*+{A \otimes B}="-10";
(0,0)*+{C}="00"; (20,10)*+{A \times B}="10";
(20,-10)*+{A \otimes B}="11";
{\ar^{S} "00";"10"};
{\ar@{~>}^{R}"10";"11"};
{\ar_{T}"00";"11"};
{\ar@{=>}^{\lambda}(17,4)*+{};(12,-5)*+{}};
{\ar^{L}"-10";"00"};
{\ar@/^1pc/^{Q}"-10";"10"};
{\ar@/_1pc/_{1}"-10";"11"};
{\ar@{~>}^{R}"-20";"-10"};
\endxy
\]

The universal property of $C$ induces a unique pseudofunctor $U:A \times B \rightsquigarrow C$ such that  $SU=1$, $TU=R$ and $\lambda U =1$.  Because both $SU=1$ and $TU=R$ are cubical functors, and because $S$ and $T$ jointly reflect the property of a 2-cell being an identity, it follows that $U$ is also cubical.  As such we obtain a unique 2-functor $L:A\otimes B \to C$ such that $LR=U$.  The universal property of $R:A \times B \rightsquigarrow A \otimes B$ ensures that $SL=Q$ and $TL=1$ and we obtain the desired invertible icon as the horizontal composite $\lambda L:RQ \cong 1$.  The final claim then follows from \cref{prop:equiv_ff}.
\end{proof}

\begin{thm}
For 2-categories $A, B$, the factorisation
\[
A \star B \stackrel{P}{\longrightarrow} A \otimes B \stackrel{Q}{\longrightarrow} A \times B
\]
of the canonical comparison 2-functor $K:A \star B \to A \times B$ is the factorisation of $K$ into a boba 2-functor followed by an lff 2-fuctor.  Furthermore, $\otimes$ can be extended to a unique closed symmetric monoidal structure on \twocat such that both $P$ and $Q$ are oplax monoidal structures on the identity functor.
\end{thm}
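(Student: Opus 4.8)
The plan is to handle the two assertions separately: the factorisation claim follows directly from the propositions already proved, while the symmetric monoidal structure is produced by \cref{thm:factormonoidal} once its hypotheses are checked, with closedness added afterwards by hand. For the factorisation, recall from \eqref{eq:PQ} that $K = QP$. By \cref{prop:pboba} the 2-functor $P$ is boba and so lies in the left class $\m E$, and by \cref{prop:qlff} the 2-functor $Q$ is locally full and faithful and so lies in the right class $\m M$. Since the factorisation of a map through an orthogonal factorisation system is unique up to unique isomorphism, this exhibits $A \star B \stackrel{P}{\longrightarrow} A \otimes B \stackrel{Q}{\longrightarrow} A \times B$ as \emph{the} $(boba/lff)$-factorisation of $K$.

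For the monoidal structure I would apply \cref{thm:factormonoidal} with $C = \twocat$, taking $\otimes_{1} = \star$ and $\otimes_{2} = \times$, taking the oplax comparison to be the symmetric oplax structure $(1,K)$ on the identity 2-functor, and taking $(\m E, \m M)$ to be the $(boba/lff)$-factorisation system. Both $\star$ and $\times$ are symmetric monoidal with unit the terminal 2-category $1$ — the cartesian case is standard, the funny case follows from the pushout description of \cref{Defi:funny} — and $(1,K)$ is oplax symmetric monoidal as recorded earlier. The unit comparison $f_{0} \colon 1 \to 1$ is the identity, whose $(boba/lff)$-factorisation is trivial, so the new unit is $I = 1$, consistent with $A \otimes 1 \cong A \cong 1 \otimes B$. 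The one thing still to verify is that the factorisation system is compatible with the two tensor products.

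This verification is the main, if modest, obstacle, and splits into two parts. For the right class it is immediate: the hom-category identity $(A \times B)\big((x,u),(y,v)\big) = A(x,y) \times B(u,v)$ shows that the hom-functor induced by $g \times g'$ is the product of those induced by $g$ and $g'$, and a product of fully faithful functors is fully faithful; hence $g, g'$ locally full and faithful implies $g \times g'$ is too. For the left class I must show that $f, f'$ boba implies $f \star f'$ boba, that is, that the underlying-category functor $U \colon \twocat \to \Cat$ takes $f \star f'$ to an isomorphism. The key point is that $U$ is a left adjoint, its right adjoint sending a category to the associated chaotic (locally indiscrete) 2-category; hence $U$ preserves the defining pushout of \cref{Defi:funny} together with the products by discrete objects appearing in it, so that $U(A \star B) \cong UA \star UB$ naturally in $A$ and $B$. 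Therefore $U(f \star f') \cong Uf \star Uf'$, which is an isomorphism whenever $Uf$ and $Uf'$ are, and $f \star f'$ is boba as required.

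With the hypotheses established, \cref{thm:factormonoidal} yields a unique symmetric monoidal structure on $\twocat$, with tensor the middle functor of the factorisation and unit $1$, for which the two factored comparisons equip the identity 2-functor with oplax symmetric monoidal structures. Taking the factorisation to be $P, Q$ through the Gray tensor product — legitimate by the first assertion — identifies the middle functor with $\otimes$ and the factored comparisons with $P$ and $Q$, giving precisely the asserted symmetric monoidal structure. Closedness is not provided by \cref{thm:factormonoidal} and must be supplied separately: by \cref{prop:otimes_exists} the functor $Ps(B,-)$ has a left adjoint, which by the defining universal property $\twocat(A \otimes B, C) \cong \twocat(A, Ps(B,C))$, natural in $C$, is $- \otimes B$; thus $- \otimes B \dashv Ps(B,-)$ for every $B$ and the structure is closed with internal hom $Ps(B,C)$. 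Uniqueness of the symmetric monoidal data comes from \cref{thm:factormonoidal}, and the closed structure is then determined up to isomorphism by uniqueness of adjoints.
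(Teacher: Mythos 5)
Your proposal is correct and takes essentially the same route as the paper: the factorisation claim is exactly \cref{prop:pboba} together with \cref{prop:qlff}, and the symmetric monoidal structure comes from applying \cref{thm:factormonoidal} to $(\star,\times)$ with the $(boba/lff)$ system, after checking that $\star$ preserves boba 2-functors and $\times$ preserves lff 2-functors. You simply spell out the two compatibility checks that the paper's proof calls straightforward (one small quibble: preservation of the products in the pushout of \cref{Defi:funny} by $U:\twocat \to \Cat$ does not follow from $U$ being a left adjoint, though it does hold, e.g.\ because $U$ is also right adjoint to the locally discrete inclusion $\Cat \to \twocat$), and you additionally supply the closedness argument via $-\otimes B \dashv Ps(B,-)$ from \cref{prop:otimes_exists}, a point the paper's proof leaves implicit.
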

\begin{proof}
The first claim is \cref{prop:pboba} and \cref{prop:qlff}.  The final claim then follows from \cref{thm:factormonoidal}, with the only thing to check is that $\star$ preserves boba 2-functors and $\times$ preserves lff 2-functors, both of which are straightfoward to verify.
\end{proof}

\bibliographystyle{acm}

\end{document}